\let\citationorig\citation
\def\citation#1{\citationorig{#1}\@for\@tempa:=#1\do{\@ifundefined{cit@\@tempa}{\global\@namedef{cit@\@tempa}{}}{}}}
\let\bibitemorig\bibitem
\def\bibitem#1{\@ifundefined{cit@#1}{\typeout{warning: Unused bibitem `#1'}}{}\bibitemorig{#1}}
\let\old@setaddresses\@setaddresses
\def\@setaddresses{\medskip{\parindent 0pt\let\scshape\relax\let\ttfamily\relax\old@setaddresses}}
\newtheorem{theorem}{Theorem}[section]
\newtheorem{lemma}[theorem]{Lemma}
\newtheorem{corollary}[theorem]{Corollary}
\newtheorem{proposition}[theorem]{Proposition}
\newtheorem{conjecture}[theorem]{Conjecture}
\newtheorem{observation}[theorem]{Observation}
\newtheorem{fact}{Fact}
\theoremstyle{remark}
\newtheorem{claim}{Claim}
\newtheorem*{claim*}{Claim}
\renewenvironment{enumerate}{\begin{enumorig}[label=\textup{(\arabic*)}, noitemsep, topsep=3pt plus 3pt, leftmargin=*]}{\end{enumorig}}
\renewenvironment{itemize}{\begin{itemorig}[label=\textbullet, noitemsep, topsep=3pt plus 3pt, leftmargin=1.1em]}{\end{itemorig}}
\DeclareMathOperator{\beer}{b} 
\DeclareMathOperator{\conv}{c}
\DeclareMathOperator{\smc}{smc}
\DeclareMathOperator{\Aff}{Aff}
\let\Box\undefined 
\DeclareMathOperator{\Box}{Box} 
\DeclareMathOperator{\Conv}{Conv} 
\DeclareMathOperator{\Seg}{Seg} 
\DeclareMathOperator{\Simp}{Simp} 
\DeclareMathOperator{\Skel}{Skel}
\DeclareMathOperator{\Vis}{Vis} 
\newcommand{\bd}{\partial} 
\newcommand{\inte}[1]{{#1}^\circ} 
\newcommand{\nei}[2]{\mathcal{N}_{#1}(#2)} 
\newcommand{\bbR}{\mathbb{R}} 
\newcommand{\bbN}{\mathbb{N}} 
\newcommand{\bbQ}{\mathbb{Q}}
\newcommand{\calC}{\mathcal{C}}
\newcommand{\calE}{\mathcal{E}}
\newcommand{\calP}{\mathcal{P}}
\newcommand{\calR}{\mathcal{R}}
\newcommand{\calU}{\mathcal{U}}
\newcommand{\fB}{\mathfrak{B}}
\newcommand{\fBv}{\fB_|}
\newcommand{\fBl}{\fB_{\lhd}}
\newcommand{\fBr}{\fB_{\rhd}}
\newcommand{\fBo}{\fB_{\bullet}}
\newcommand{\fS}{\mathfrak{S}}
\newcommand{\fT}{\mathfrak{T}}
\newcommand{\fV}{\mathfrak{V}}
\let\leq\leqslant
\let\geq\geqslant
\let\setminus\smallsetminus
\title{On the Beer index of convexity and its variants}
\author{Martin Balko\and Vít Jelínek\and Pavel Valtr\and Bartosz Walczak}
\address[Martin Balko, Pavel Valtr]{Department of Applied Mathematics, Faculty of Mathematics and Physics, Charles University, Prague, Czech Republic}
\email{\href{mailto:balko@kam.mff.cuni.cz}{balko@kam.mff.cuni.cz}, \href{mailto:valtr@kam.mff.cuni.cz}{valtr@kam.mff.cuni.cz}}
\address[Vít Jelínek]{Computer Science Institute, Faculty of Mathematics and Physics, Charles University, Prague, Czech Republic}
\email{\href{mailto:jelinek@iuuk.mff.cuni.cz}{jelinek@iuuk.mff.cuni.cz}}
\address[Bartosz Walczak]{Department of Theoretical Computer Science, Faculty of Mathematics and Computer Science, Jagiellonian University, Kraków, Poland}
\email{\href{mailto:walczak@tcs.uj.edu.pl}{walczak@tcs.uj.edu.pl}}
\thanks{A journal version of this paper appeared in \href{http://doi.org/10.1007/s00454-016-9821-3}{\emph{Discrete Comput.\ Geom.}\ 57~(1), 179--214, 2017}.}
\thanks{A preliminary version of this paper appeared in: \href{http://doi.org/10.4230/LIPIcs.SOCG.2015.406}{Lars Arge and János Pach (eds.), \emph{31st International Symposium on Computational Geometry (SoCG 2015)}, vol.~34 of \emph{Leibniz International Proceedings in Informatics (LIPIcs)}, pp.~406--420, Leibniz-Zentrum für Informatik, Dagstuhl, 2015}.}
\thanks{The first three authors were supported by the grant GAČR 14-14179S.
The first author acknowledges the support of the Grant Agency of the Charles University, GAUK 690214 and the grant SVV-2015-260223.
The last author was supported by the Ministry of Science and Higher Education of Poland \emph{Mobility Plus} grant 911/MOB/2012/0.}
\begin{document}

\begin{abstract}
Let $S$ be a subset of $\mathbb{R}^d$ with finite positive Lebesgue measure.
The \emph{Beer index of convexity} $\operatorname{b}(S)$ of $S$ is the probability that two points of $S$ chosen uniformly independently at random see each other in~$S$.
The \emph{convexity ratio} $\operatorname{c}(S)$ of $S$ is the Lebesgue measure of the largest convex subset of $S$ divided by the Lebesgue measure of~$S$.
We investigate the relationship between these two natural measures of convexity.

We show that every set $S\subseteq\mathbb{R}^2$ with simply connected components satisfies $\operatorname{b}(S)\leq\alpha\operatorname{c}(S)$ for an absolute constant $\alpha$, provided $\operatorname{b}(S)$ is defined.
This implies an affirmative answer to the conjecture of Cabello et~al.\ that this estimate holds for simple polygons.

We also consider higher-order generalizations of $\operatorname{b}(S)$.
For $1\leq k\leq d$, the \emph{$k$-index of convexity} $\operatorname{b}_k(S)$ of a set $S\subseteq\mathbb{R}^d$ is the probability that the convex hull of a $(k+1)$-tuple of points chosen uniformly independently at random from $S$ is contained in~$S$.
We show that for every $d\geq 2$ there is a constant $\beta(d)>0$ such that every set $S\subseteq\mathbb{R}^d$ satisfies $\operatorname{b}_d(S)\leq\beta\operatorname{c}(S)$, provided $\operatorname{b}_d(S)$ exists.
We provide an almost matching lower bound by showing that there is a constant $\gamma(d)>0$ such that for every $\varepsilon\in(0,1)$ there is a set $S\subseteq\mathbb{R}^d$ of Lebesgue measure $1$ satisfying $\operatorname{c}(S)\leq\varepsilon$ and $\operatorname{b}_d(S)\geq\gamma\frac{\varepsilon}{\log_2{1/\varepsilon}}\geq\gamma\frac{\operatorname{c}(S)}{\log_2{1/\operatorname{c}(S)}}$.
\end{abstract}

\maketitle

\section{Introduction}

For positive integers $k$ and $d$ and a Lebesgue measurable set $S\subseteq\bbR^d$, we use $\lambda_k(S)$ to denote the $k$-dimensional Lebesgue measure of~$S$.
We omit the subscript $k$ when it is clear from the context.
We also write ``measure'' instead of ``Lebesgue measure'', as we do not use any other measure in the paper.

For a set $S\subseteq\bbR^d$, let $\smc(S)$ denote the supremum of the measures of convex subsets of~$S$.
Since all convex subsets of $\bbR^d$ are measurable \cite{Lan86}, the value of $\smc(S)$ is well defined.
Moreover, Goodman's result \cite{Goo81} implies that the supremum is achieved on compact sets $S$, hence it can be replaced by maximum in this case.
When $S$ has finite positive measure, let $\conv(S)$ be defined as $\smc(S)/\lambda_d(S)$.
We call the parameter $\conv(S)$ the \emph{convexity ratio} of~$S$.

For two points $A,B\in\bbR^d$, let $\overline{AB}$ denote the closed line segment with endpoints $A$ and~$B$. Let $S$ be a subset of~$\bbR^d$.
We say that points $A,B\in S$ are \emph{visible} one from the other or \emph{see} each other in $S$ if the line segment $\overline{AB}$ is contained in~$S$.
For a point $A\in S$, we use $\Vis(A,S)$ to denote the set of points that are visible from $A$ in~$S$.
More generally, for a subset $T$ of $S$, we use $\Vis(T,S)$ to denote the set of points that are visible in $S$ from~$T$.
That is, $\Vis(T,S)$ is the set of points $A\in S$ for which there is a point $B\in T$ such that $\overline{AB}\subseteq S$.

Let $\Seg(S)$ denote the set $\{(A,B)\in S\times S\colon\overline{AB}\subseteq S\}\subseteq(\bbR^d)^2$, which we call the \emph{segment set} of~$S$.
For a set $S\subseteq\bbR^d$ with finite positive measure and with measurable $\Seg(S)$, we define the parameter $\beer(S)\in[0,1]$ by
\begin{equation*}
\beer(S)\colonequals\frac{\lambda_{2d}(\Seg(S))}{\lambda_d(S)^2}.
\end{equation*}
If $S$ is not measurable, or if its measure is not positive and finite, or if $\Seg(S)$ is not measurable, we leave $\beer(S)$ undefined.
Note that if $\beer(S)$ is defined for a set $S$, then $\conv(S)$ is defined as well.

We call $\beer(S)$ the \emph{Beer index of convexity} (or just \emph{Beer index}) of~$S$.
It can be interpreted as the probability that two points $A$ and $B$ of $S$ chosen uniformly independently at random see each other in~$S$.

\subsection{Previous results}

The Beer index was introduced in the 1970s by Beer \cite{Bee73a,Bee73b,Bee74}, who called it ``the index of convexity''.
Beer was motivated by studying the continuity properties of $\lambda(\Vis(A,S))$ as a function of~$A$.
For polygonal regions, an equivalent parameter was later independently defined by Stern \cite{Ste89}, who called it ``the degree of convexity''.
Stern was motivated by the problem of finding a computationally tractable way to quantify how close a given set is to being convex.
He showed that the Beer index of a polygon $P$ can be approximated by a Monte Carlo estimation.
Later, Rote \cite{Rot13} showed that for a polygonal region $P$ with $n$ edges the Beer index can be evaluated in polynomial time as a sum of $O(n^9)$ closed-form expressions.

Cabello et~al.\ \cite{CCK+14} have studied the relationship between the Beer index and the convexity ratio, and applied their results in the analysis of their near-linear-time approximation algorithm for finding the largest convex subset of a polygon.
We describe some of their results in more detail in Section \ref{ssec:results}.

\subsection{Terminology and notation}

We assume familiarity with basic topological notions such as path-connectedness, simple connectedness, Jordan curve, etc.
The reader can find these definitions, for example, in Prasolov's book \cite{Pra06}.

Let $\bd S$, $\inte{S}$, and $\overline{S}$ denote the boundary, the interior, and the closure of a set $S$, respectively.
For a point $A\in\bbR^2$ and $\varepsilon>0$, let $\nei{\varepsilon}{A}$ denote the open disc centered at $A$ with radius~$\varepsilon$.
For a set $X\subseteq\bbR^2$ and $\varepsilon>0$, let $\nei{\varepsilon}{X}\colonequals\bigcup_{A\in X}\nei{\varepsilon}{A}$.
A \emph{neighborhood} of a point $A\in\bbR^2$ or a set $X\subseteq\bbR^2$ is a set of the form $\nei{\varepsilon}{A}$ or $\nei{\varepsilon}{X}$, respectively, for some $\varepsilon>0$.

A closed interval with endpoints $a$ and $b$ is denoted by $[a,b]$.
Intervals $[a,b]$ with $a>b$ are considered empty.
For a point $A\in\bbR^2$, we use $x(A)$ and $y(A)$ to denote the $x$-coordinate and the $y$-coordinate of $A$, respectively.

A \emph{polygonal curve} $\Gamma$ in $\bbR^d$ is a curve specified by a sequence $(A_1,\dotsc,A_n)$ of points of $\bbR^d$ such that $\Gamma$ consists of the line segments connecting the points $A_i$ and $A_{i+1}$ for $i=1,\dotsc,n-1$.
If $A_1=A_n$, then the polygonal curve $\Gamma$ is \emph{closed}.
A polygonal curve that is not closed is called a \emph{polygonal line}.

A set $X\subseteq\bbR^2$ is \emph{polygonally connected}, or \emph{p-connected} for short, if any two points of $X$ can be connected by a polygonal line in $X$, or equivalently, by a self-avoiding polygonal line in~$X$.
For a set $X$, the relation ``$A$ and $B$ can be connected by a polygonal line in $X$'' is an equivalence relation on $X$, and its equivalence classes are the \emph{p-components} of~$X$.
A set $S$ is \emph{p-componentwise simply connected} if every p-component of $S$ is simply connected.

A \emph{line segment} in $\bbR^d$ is a bounded convex subset of a line.
A \emph{closed line segment} includes both endpoints, while an \emph{open line segment} excludes both endpoints.
For two points $A$ and $B$ in $\bbR^d$, we use $AB$ to denote the open line segment with endpoints $A$ and~$B$.
A closed line segment with endpoints $A$ and $B$ is denoted by $\overline{AB}$.

We say that a set $S\subseteq\bbR^d$ is \emph{star-shaped} if there is a point $C\in S$ such that $\Vis(C,S)=S$.
That is, a star-shaped set $S$ contains a point which sees the entire~$S$.
Similarly, we say that a set $S$ is \emph{weakly star-shaped} if $S$ contains a line segment $\ell$ such that $\Vis(\ell,S)=S$.

\subsection{Results}
\label{ssec:results}

We start with a few simple observations.
Let $S$ be a subset of $\bbR^2$ such that $\Seg(S)$ is measurable.
For every $\varepsilon>0$, $S$ contains a convex subset $K$ of measure at least $(\conv(S)-\varepsilon)\lambda_2(S)$.
Two points of $S$ chosen uniformly independently at random both belong to $K$ with probability at least $(\conv(S)-\varepsilon)^2$, hence $\beer(S)\geq(\conv(S)-\varepsilon)^2$.
This yields $\beer(S)\geq\conv(S)^2$.
This simple lower bound on $\beer(S)$ is tight, as shown by a set $S$ which is a disjoint union of a single large convex component and a large number of small components of negligible size.

It is more challenging to find an upper bound on $\beer(S)$ in terms of $\conv(S)$, possibly under additional assumptions on the set~$S$.
This is the general problem addressed in this paper.

As a motivating example, observe that a set $S$ consisting of $n$ disjoint convex components of the same size satisfies $\beer(S)=\conv(S)=\frac{1}{n}$.
It is easy to modify this example to obtain, for any $\varepsilon>0$, a simple star-shaped polygon $P$ with $\beer(P)\geq\frac{1}{n}-\varepsilon$ and $\conv(P)\leq\frac{1}{n}$, see Figure~\ref{fig:lower-bound-2d}.
This shows that $\beer(S)$ cannot be bounded from above by a sublinear function of $\conv(S)$, even for simple polygons~$S$.

\begin{figure}[t]
\centering
\includegraphics{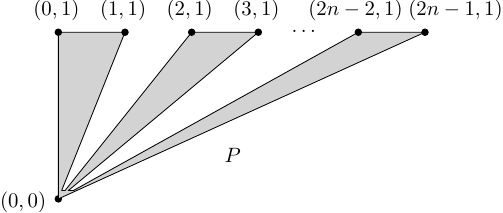}
\caption{A star-shaped polygon $P$ with $\beer(P)\geq\frac{1}{n}-\varepsilon$ and $\conv(P)\leq\frac{1}{n}$.
The polygon $P$ is a union of $n$ triangles $(0,0)(2i,1)(2i+1,1)$, $i=0,\dotsc,n-1$, and of a triangle $(0,0)(0,\delta)((2n-1)\delta,\delta)$, where $\delta$ is very small.}
\label{fig:lower-bound-2d}
\end{figure}

For weakly star-shaped polygons, Cabello et~al.\ \cite{CCK+14} showed that the above example is essentially optimal, providing the following linear upper bound on $\beer(S)$.

\begin{theorem}[{\cite[Theorem 5]{CCK+14}}]
\label{thm:previous-star-shaped}
For every weakly star-shaped simple polygon\/ $P$, we have\/ $\beer(P)\leq 18\conv(P)$.
\end{theorem}

For polygons that are not weakly star-shaped, Cabello et~al.\ \cite{CCK+14} gave a superlinear bound.

\begin{theorem}[{\cite[Theorem 6]{CCK+14}}]
Every simple polygon\/ $P$ satisfies
\begin{equation*}
\beer(P)\leq 12\conv(P)\left(1+\log_2{\frac{1}{\conv(P)}}\right).
\end{equation*}
\end{theorem}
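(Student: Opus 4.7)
The plan is to derive Theorem~\ref{thmPreviousGeneral} from the weakly star-shaped bound Theorem~\ref{thmPreviousStarShaped} by a dyadic decomposition of $\seg(P)$ by local ``scale'', losing only a logarithmic factor. Normalize so that $\lambda_2(P) = 1$, write $c = \con(P)$, and note that
\[
\PrConv(P)\,\lambda_2(P)^2 = \lambda_4(\seg(P)) = \int_P \lambda_2(\vis(A, P)) \, dA.
\]
To each pair $(A, B) \in \seg(P)$ I would assign a scale $w(A, B) \in (0, c]$, defined as the area of a largest convex subset of $P$ containing $\overline{AB}$; since this subset is in particular a convex subset of $P$, we have $w(A,B) \leq c$. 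I then partition $\seg(P)$ into dyadic buckets $\mathcal{B}_k = \{(A, B) \in \seg(P) : 2^{-k-1}c < w(A, B) \leq 2^{-k}c\}$ for $k = 0, 1, \ldots, K$ with $K = \lceil \log_2 (1/c) \rceil$, so that $\seg(P)$ is the disjoint union of $O(\log_2(1/c))$ such buckets.

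The heart of the argument is to show that $\lambda_4(\mathcal{B}_k) = O(c)$ for each $k$ individually, after which summing over the $O(\log_2(1/c))$ buckets yields the theorem. To bound $\lambda_4(\mathcal{B}_k)$ I would cover $\mathcal{B}_k$ by sets of the form $R \times R$, where $R$ ranges over a family $\mathcal{R}_k$ of weakly star-shaped subregions of $P$. A natural candidate is $R(A, B) \colonequals \vis(\overline{A'B'}, P)$, where $\overline{A'B'}$ is the maximal chord of $P$ collinear with $\overline{AB}$; this region is weakly star-shaped by definition, contains both $A$ and $B$, and has every convex subset contained in $P$, hence of area at most $c$. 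Theorem~\ref{thmPreviousStarShaped} then yields $\lambda_4(\seg(R)) \leq 18\, c\, \lambda_2(R)$ for every $R \in \mathcal{R}_k$, and after summing and exploiting the scale constraint $w(A, B) \in (2^{-k-1}c, 2^{-k}c]$, the measure of $\mathcal{B}_k$ should come out to $O(c)$, independent of $k$.

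The main obstacle, and the technical heart of the proof, is to construct $\mathcal{R}_k$ with the controlled total size $\sum_{R \in \mathcal{R}_k} \lambda_2(R) = O(1)$: one must ensure that the visibility regions chosen at scale $2^{-k}c$ are not too redundant. Intuitively, pairs at scale $\approx 2^{-k}c$ live in ``strips'' of area $\asymp 2^{-k}c$, so $O(2^{k})$ such strips suffice to cover a polygon of unit area; each contributes $O(2^{-k})$ to $\lambda_4(\mathcal{B}_k)$ via Theorem~\ref{thmPreviousStarShaped}, producing a balanced $O(c)$ bound per bucket. Making this counting rigorous---for instance, by selecting a net of representative maximal chords of $P$ at each scale, grouping chords with similar direction and offset, and exploiting the polygonal structure of $\bd P$ to bound the number of classes---is the delicate part. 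Once this is settled, summing the per-bucket estimates across the $O(\log_2(1/c))$ scales yields $\PrConv(P) = O\bigl(c \log_2(1/c)\bigr)$, and optimizing the constants in the covering and in the application of Theorem~\ref{thmPreviousStarShaped} should produce the explicit bound $12\, \con(P) \bigl(1 + \log_2(1/\con(P))\bigr)$.
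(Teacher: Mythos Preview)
This theorem is not proved in the paper under review; it is quoted from Cabello et al.\ \cite{cab14} as prior work, and the paper's own Theorem~\ref{thmUpperBound2D} supersedes it with a linear bound. So there is no in-paper proof to compare your attempt against.

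On its own merits, your proposal is a plan with the decisive step missing---and you acknowledge this (``the main obstacle, and the technical heart of the proof, is to construct $\mathcal{R}_k$ with the controlled total size''). Without that covering there is no argument, and the heuristic you offer does not survive scrutiny: the visibility region $\vis(\overline{A'B'},P)$ of a maximal chord need not be a thin strip and can have area as large as $\lambda_2(P)$, so a packing count of the form ``$O(2^k)$ strips of area $\asymp 2^{-k}c$'' has no basis. ``Selecting a net of representative maximal chords'' is left entirely unspecified, and controlling the overlap of the resulting visibility regions so that $\sum_R\lambda_2(R)=O(1)$ is exactly the hard part you have not done. There is a second, independent gap: your buckets $k=0,\ldots,\lceil\log_2(1/c)\rceil$ do not exhaust $\seg(P)$, because $w(A,B)$ can be arbitrarily small (place $\overline{AB}$ in a long narrow corridor of $P$); the tail $\{(A,B):w(A,B)\leq c^2/2\}$ is never handled. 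If you want a workable template, the approach in \cite{cab14}---and, in sharpened form, in the present paper---is not a dyadic scale decomposition but a decomposition of the polygon into visibility \emph{levels} from a root diagonal (cf.\ Lemmas~\ref{lem-branch-structure} and~\ref{lem-segment}); each level is weakly star-shaped from its root, so Theorem~\ref{thmPreviousStarShaped} applies level by level, and the logarithmic loss comes from how segments straddle consecutive levels---precisely the loss the present paper removes via the charging scheme of Lemma~\ref{lem-volbaPokryti}.
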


Moreover, Cabello et~al.\ \cite{CCK+14} conjectured that even for a general simple polygon $P$, $\beer(P)$ can be bounded from above by a linear function of $\conv(P)$.
(The question whether $\beer(P)=O(\conv(P))$ for simple polygons $P$ was originally asked by Cabello and Saumell \cite{CaS-personal}.)
The next theorem, which is the first main result of this paper, verifies this conjecture.
Recall that $\beer(S)$ is defined for a set $S$ if and only if $S$ has finite positive measure and $\Seg(S)$ is measurable.
Recall also that a set is p-componentwise simply connected if its p-components are simply connected.
In particular, every simply connected set is p-componentwise simply connected.

\begin{theorem}
\label{thm:upper-bound-2d}
Every p-componentwise simply connected set\/ $S\subseteq\bbR^2$ whose\/ $\beer(S)$ is defined satisfies\/ $\beer(S)\leq 180\conv(S)$.
\end{theorem}

Clearly, every simple polygon satisfies the assumptions of Theorem \ref{thm:upper-bound-2d}.
Hence we directly obtain the following, which verifies the conjecture of Cabello et~al.\ \cite{CCK+14}.

\begin{corollary}
\label{cor:upper-bound-2d}
Every simple polygon\/ $P\subseteq\bbR^2$ satisfies\/ $\beer(P)\leq 180\conv(P)$.
\end{corollary}

The main restriction in Theorem \ref{thm:upper-bound-2d} is the assumption that $S$ is p-componentwise simply connected.
This assumption cannot be omitted, as shown by the set $S\colonequals[0,1]^2\setminus\bbQ^2$, where it is easy to verify that $\conv(S)=0$ and $\beer(S)=1$, see Proposition \ref{prop:example}.

A related construction shows that Theorem \ref{thm:upper-bound-2d} fails in higher dimensions.
To see this, consider again the set $S\colonequals[0,1]^2\setminus\bbQ^2$, and define a set $S'\subseteq\bbR^3$ by
\begin{equation*}
S'\colonequals\{(tx,ty,t)\colon t\in[0,1]\text{ and }(x,y)\in S\}.
\end{equation*}
Again, it is easy to verify that $\conv(S')=0$ and $\beer(S')=1$, although $S'$ is simply connected, even star-shaped.

Despite these examples, we will show that meaningful analogues of Theorem \ref{thm:upper-bound-2d} for higher dimensions and for sets that are not p-componentwise simply connected are possible.
The key is to use higher-order generalizations of the Beer index, which we introduce now.

For $k\in\{1,\dotsc,d\}$ and a set $S\subseteq\bbR^d$, we define the set $\Simp_k(S)\subseteq(\bbR^d)^{k+1}$ by
\begin{equation*}
\Simp_k(S)\colonequals\{(A_0,\dotsc,A_k)\in S^{k+1}\colon\Conv(\{A_0,\dotsc,A_k\})\subseteq S\},
\end{equation*}
where the operator $\Conv$ denotes the convex hull of a set of points.
We call $\Simp_k(S)$ the \emph{$k$-simplex set} of~$S$.
Note that $\Simp_1(S)=\Seg(S)$.

For $k\in\{1,\dotsc,d\}$ and a set $S\subseteq\bbR^d$ with finite positive measure and with measurable $\Simp_k(S)$, we define $\beer_k(S)$ by
\begin{equation*}
\beer_k(S)\colonequals\frac{\lambda_{(k+1)d}(\Simp_k(S))}{\lambda_d(S)^{k+1}}.
\end{equation*}
Note that $\beer_1(S)=\beer(S)$.
We call $\beer_k(S)$ the \emph{$k$-index of convexity} of~$S$.
We again leave $\beer_k(S)$ undefined if $S$ or $\Simp_k(S)$ is non-measurable, or if the measure of $S$ is not finite and positive.

We can view $\beer_k(S)$ as the probability that the convex hull of $k+1$ points chosen from $S$ uniformly independently at random is contained in~$S$.
For any $S\subseteq\bbR^d$, we have $\beer_1(S)\geq\beer_2(S)\geq\dotsb\geq\beer_d(S)$, provided all the $\beer_k(S)$ are defined.

We remark that the set $S\colonequals[0,1]^d\setminus\bbQ^d$ satisfies $\conv(S)=0$ and $\beer_1(S)=\beer_2(S)=\dotsb=\beer_{d-1}(S)=1$, see Proposition \ref{prop:example}.
Thus, for a general set $S\subseteq\bbR^d$, only the $d$-index of convexity can conceivably admit a nontrivial upper bound in terms of $\conv(S)$.
Our next result shows that such an upper bound on $\beer_d(S)$ exists and is linear in $\conv(S)$.

\begin{theorem}
\label{thm:upper-bound-full-vis}
For every\/ $d\geq 2$, there is a constant\/ $\beta=\beta(d)>0$ such that every set\/ $S\subseteq\bbR^d$ with\/ $\beer_d(S)$ defined satisfies\/ $\beer_d(S)\leq\beta\conv(S)$.
\end{theorem}

We do not know if the linear upper bound in Theorem \ref{thm:upper-bound-full-vis} is best possible.
We can, however, construct examples showing that the bound is optimal up to a logarithmic factor.
This is our last main result.

\begin{theorem}
\label{thm:lower-bound}
For every\/ $d\geq 2$, there is a constant\/ $\gamma=\gamma(d)>0$ such that for every\/ $\varepsilon\in(0,1)$, there is a set\/ $S\subseteq\bbR^d$ satisfying\/ $\conv(S)\leq\varepsilon$ and\/ $\smash[b]{\beer_d(S)\geq\gamma\frac{\varepsilon}{\log_2 1/\varepsilon}}$, and in particular, we have\/ $\beer_d(S)\geq\gamma\frac{\conv(S)}{\log_2 1/\conv(S)}$.
\end{theorem}

The proof of Theorem \ref{thm:upper-bound-2d} is given in Section~\ref{sec:visibility-2d}.
In Section~\ref{sec:visibility-higher-dim}, we will prove Theorems \ref{thm:upper-bound-full-vis} and \ref{thm:lower-bound}.
We conclude, in Section~\ref{sec:generalizations}, with some further remarks and a collection of open problems.

\section{Bounding the mutual visibility in the plane}
\label{sec:visibility-2d}

The goal of this section is to prove Theorem \ref{thm:upper-bound-2d}.
Since the proof is rather long and complicated, we first present a high-level overview of its main ideas.

We first show that it is sufficient to prove the estimate from Theorem \ref{thm:upper-bound-2d} for bounded open simply connected sets.
This is formalized by the next lemma, whose proof can be found in Section \ref{ssec:reduction}.

\begin{lemma}
\label{lem:reduction}
Let\/ $\alpha>0$ be a constant such that every bounded open simply connected set\/ $T\subseteq\bbR^2$ satisfies\/ $\beer(T)\leq\alpha\conv(T)$.
It follows that every p-componentwise simply connected set\/ $S\subseteq\bbR^2$ with\/ $\beer(S)$ defined satisfies\/ $\beer(S)\leq\alpha\conv(S)$.
\end{lemma}

In the proof of Lemma \ref{lem:reduction}, we first show that the set $S$ can be reduced to a bounded open set $S''$ whose Beer index $\beer(S'')$ can be arbitrarily close to $\beer(S)$ from below.
This is done by considering a part $S'$ of $S$ that is contained in a sufficiently large disc and by showing that all segments in $S'$ are in fact contained in the interior of $S'$, except for a set of measure zero.
The proof is then finished by choosing $S''$ as the interior of $S'$ and by applying the assumption of the lemma to every p-component of~$S''$.

Suppose now that $S$ is a bounded open simply connected set.
We seek a bound of the form $\beer(S)=O(\conv(S))$.
This is equivalent to a bound of the form $\lambda_4(\Seg(S))=O(\smc(S)\lambda_2(S))$.
We therefore need a suitable upper bound on $\lambda_4(\Seg(S))$.

We first choose in $S$ a \emph{diagonal} $\ell$ (i.e., an inclusion-maximal line segment in $S$), and show that the set $S\setminus\ell$ is a union of two open simply connected sets $S_1$ and $S_2$ (Lemma \ref{lem:cutting}).
It is not hard to show that the segments in $S$ that cross the diagonal $\ell$ contribute to $\lambda_4(\Seg(S))$ by at most $O(\smc(S)\lambda_2(S))$ (Lemma \ref{lem:crossing-visibility}).
Our main task is to bound the measure of $\Seg(S_i\cup\ell)$ for $i=1,2$.
The two sets $S_i\cup\ell$ are what we call \emph{rooted sets}.
Informally, a rooted set is a union of a simply connected open set $S'$ and an open segment $r\subseteq\bd S'$, called the root.

To bound $\lambda_4(\Seg(R))$ for a rooted set $R$ with root $r$, we partition $R$ into \emph{levels} $L_1,L_2,\dotsc$, where $L_k$ contains the
points of $R$ that can be connected to $r$ by a polygonal line with $k$ segments, but not by a polygonal line with $k-1$ segments.
Each segment in $R$ is contained in a union $L_i\cup L_{i+1}$ for some $i\geq 1$.
Thus, a bound of the form $\lambda_4(\Seg(L_i\cup L_{i+1}))=O(\smc(R)\lambda_2(L_i\cup L_{i+1}))$ implies the required bound for $\lambda_4(\Seg(R))$.

We will show that each p-component of $L_i\cup L_{i+1}$ is a rooted set, with the extra property that all its points are reachable from its root by a polygonal line with at most two segments (Lemma \ref{lem:branch-structure}).
To handle such sets, we will generalize the techniques that Cabello et~al.\ \cite{CCK+14} have used to handle weakly star-shaped sets in their proof of Theorem~\ref{thm:previous-star-shaped}.
We will assign to every point $A\in R$ a set $\fT(A)$ of measure $O(\smc(R))$, such that for every $(A,B)\in\Seg(R)$, we have either $B\in\fT(A)$ or $A\in\fT(B)$ (Lemma~\ref{lem:rooted-cover}).
From this, Theorem \ref{thm:upper-bound-2d} will follow easily.

\subsection{Proof of Theorem \ref{thm:upper-bound-2d} for bounded open simply connected sets}

First, we need a few auxiliary lemmas.

\begin{lemma}
\label{lem:open}
For every positive integer\/ $d$, if\/ $S$ is an open subset of\/ $\bbR^d$, then the set\/ $\Seg(S)$ is open and the set\/ $\Vis(A,S)$ is open for every point\/ $A\in S$.
\end{lemma}

\begin{proof}
Choose a pair of points $(A,B)\in\Seg(S)$.
Since $S$ is open and $\overline{AB}$ is compact, there is $\varepsilon>0$ such that $\nei{\varepsilon}{\overline{AB}}\subseteq S$.
Consequently, for any $A'\in\nei{\varepsilon}{A}$ and $B'\in\nei{\varepsilon}{B}$, we have $\overline{A'B'}\subseteq S$, that is, $(A',B')\in\Seg(S)$.
This shows that the set $\Seg(S)$ is open.
If we fix $A'=A$, then it follows that the set $\Vis(A,S)$ is open.
\end{proof}

\begin{lemma}
\label{lem:visibility}
Let\/ $S$ be a simply connected subset of\/ $\bbR^2$ and let\/ $\ell$ and\/ $\ell'$ be line segments in\/~$S$.
It follows that the set\/ $\Vis(\ell',S)\cap\ell$ is a (possibly empty) subsegment of\/~$\ell$.
\end{lemma}

\begin{proof}
The statement is trivially true if $\ell$ and $\ell'$ intersect or have the same supporting line, or if $\Vis(\ell',S)\cap\ell$ is empty.
Suppose that these situations do not occur.
Let $A,B\in\ell$ and $A',B'\in\ell'$ be such that $\overline{AA'},\overline{BB'}\subseteq S$.
The points $A,A',B',B$ form a (possibly self-intersecting) tetragon $Q$ whose boundary is contained in~$S$.
Since $S$ is simply connected, the interior of $Q$ is contained in~$S$.
If $Q$ is not self-intersecting, then clearly $\overline{AB}\subseteq\Vis(\ell',S)$.
Otherwise, $\overline{AA'}$ and $\overline{BB'}$ have a point $D$ in common, and every point $C\in AB$ is visible in $R$ from the point $C'\in A'B'$ such that $D\in\overline{CC'}$.
This shows that $\Vis(\ell',S)\cap\ell$ is a convex subset and hence a subsegment of~$\ell$.
\end{proof}

Now, we define rooted sets and their tree-structured decomposition, and we explain how they arise in the proof of Theorem \ref{thm:upper-bound-2d}.

A set $S\subseteq\bbR^2$ is \emph{half-open} if every point $A\in S$ has a neighborhood $\nei{\varepsilon}{A}$ that satisfies one of the following two conditions:
\begin{enumerate}
\item\label{item:half-open-1} $\nei{\varepsilon}{A}\subseteq S$,
\item\label{item:half-open-2} $\nei{\varepsilon}{A}\cap\bd S$ is a diameter of $\nei{\varepsilon}{A}$ splitting it into two subsets, one of which (including the diameter) is $\nei{\varepsilon}{A}\cap S$ and the other (excluding the diameter) is $\nei{\varepsilon}{A}\setminus S$.
\end{enumerate}
The condition~\ref{item:half-open-1} holds for points $A\in\inte{S}$, while the condition~\ref{item:half-open-2} holds for points $A\in\bd S$.

A set $R\subseteq\bbR^2$ is a \emph{rooted set} if the following conditions are satisfied:
\begin{enumerate}
\item $R$ is bounded,
\item $R$ is p-connected and simply connected,
\item $R$ is half-open,
\item $R\cap\bd R$ is an open line segment.
\end{enumerate}
The open line segment $R\cap\bd R$ is called the \emph{root} of~$R$.
Every rooted set, as the union of a non-empty open set and an open line segment, is measurable and has positive measure.

A \emph{diagonal} of a set $S\subseteq\bbR^2$ is a line segment contained in $S$ that is not a proper subset of any other line segment contained in~$S$.
Clearly, if $S$ is open, then every diagonal of $S$ is an open line segment.
It is easy to see that the root of a rooted set $R$ is a diagonal of~$R$.

The following lemma allows us to use a diagonal to split a bounded open simply connected subset of $\bbR^2$ into two rooted sets.
It is intuitively clear, and its formal proof is postponed to Section \ref{ssec:cutting}.

\begin{lemma}
\label{lem:cutting}
Let\/ $S$ be a bounded open simply connected subset of\/ $\bbR^2$, and let\/ $\ell$ be a diagonal of\/~$S$.
It follows that the set\/ $S\setminus\ell$ has two p-components\/ $S_1$ and\/~$S_2$.
Moreover, $S_1\cup\ell$ and $S_2\cup\ell$ are rooted sets, and\/ $\ell$ is their common root.
\end{lemma}

Let $R$ be a rooted set.
For a positive integer $k$, the \emph{$k$th level} $L_k$ of $R$ is the set of points of $R$ that can be connected to the root of $R$ by a polygonal line in $R$ consisting of $k$ segments but cannot be connected to the root of $R$ by a polygonal line in $R$ consisting of fewer than $k$ segments.
We consider a degenerate one-vertex polygonal line as consisting of one degenerate segment, so the root of $R$ is part of~$L_1$.
Thus $L_1=\Vis(r,R)$, where $r$ denotes the root of~$R$.
A \emph{$k$-body} of $R$ is a p-component of~$L_k$.
A \emph{body} of $R$ is a $k$-body of $R$ for some~$k$.
See Figure~\ref{fig:segment} for an example of a rooted set and its partitioning into levels and bodies.

We say that a rooted set $P$ is \emph{attached} to a set $Q\subseteq\bbR^2\setminus P$ if the root of $P$ is subset of the interior of $P\cup Q$.
The following lemma explains the structure of levels and bodies.
Although it is intuitively clear, its formal proof requires quite a lot of work and can be found in Section \ref{ssec:branch-structure}.

\begin{lemma}
\label{lem:branch-structure}
Let\/ $R$ be a rooted set and\/ $(L_k)_{k\geq 1}$ be its partition into levels.
It follows that
\begin{enumerate}
\item\label{item:body-1} $R=\bigcup_{k\geq 1}L_k$; consequently, $R$ is the union of all its bodies;
\item\label{item:body-2} every body\/ $P$ of\/ $R$ is a rooted set such that\/ $P=\Vis(r,P)$, where\/ $r$ denotes the root of\/~$P$;
\item\label{item:body-3} $L_1$ is the unique\/ $1$-body of\/ $R$, and the root of\/ $L_1$ is the root of\/~$R$;
\item\label{item:body-4} every\/ $j$-body\/ $P$ of\/ $R$ with\/ $j\geq 2$ is attached to a unique\/ $(j-1)$-body of\/~$R$.
\end{enumerate}
\end{lemma}

Lemma \ref{lem:branch-structure} yields a tree structure on the bodies of~$R$.
The root of this tree is the unique $1$-body $L_1$ of $R$, called the \emph{root body} of~$R$.
For a $k$-body $P$ of $R$ with $k\geq 2$, the parent of $P$ in the tree is the unique $(k-1)$-body of $R$ that $P$ is attached to, called the \emph{parent body} of~$P$.

\begin{figure}[t]
\centering
\includegraphics{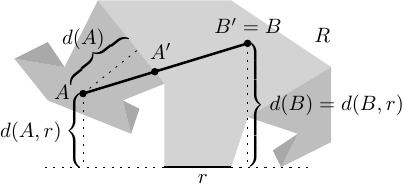}
\caption{Example of a rooted set $R$ partitioned into six bodies. The three levels of $R$ are distinguished with three shades of gray.
The segment $A'B'\cup\{B'\}$ is the base segment of $\overline{AB}$.}
\label{fig:segment}
\end{figure}

\begin{lemma}
\label{lem:segment}
Let\/ $R$ be a rooted set, $(L_k)_{k\geq 1}$ be the partition of\/ $R$ into levels, $\ell$ be a closed line segment in\/ $R$, and\/ $k\geq 1$ be minimum such that\/ $\ell\cap L_k\neq\emptyset$.
It follows that\/ $\ell\subseteq L_k\cup L_{k+1}$, $\ell\cap L_k$ is a subsegment of\/ $\ell$ contained in a single\/ $k$-body\/ $P$ of\/ $R$, and\/ $\ell\cap L_{k+1}$ consists of at most two subsegments of\/ $\ell$ each contained in a single\/ $(k+1)$-body whose parent body is\/~$P$.
\end{lemma}

\begin{proof}
The definition of the levels directly yields $\ell\subseteq L_k\cup L_{k+1}$.
The segment $\ell$ splits into subsegments each contained in a single $k$-body or $(k+1)$-body of~$R$.
By Lemma~\ref{lem:branch-structure}, the bodies of any two consecutive of these subsegments are in the parent-child relation of the body tree.
This implies that $\ell\cap L_k$ lies within a single $k$-body~$P$.
By Lemma \ref{lem:visibility}, $\ell\cap L_k$ is a subsegment of~$\ell$.
Consequently, $\ell\cap L_{k+1}$ consists of at most two subsegments.
\end{proof}

In the setting of Lemma \ref{lem:segment}, we call the subsegment $\ell\cap L_k$ of $\ell$ the \emph{base segment} of $\ell$, and we call the body $P$ that contains $\ell\cap L_k$ the \emph{base body} of~$\ell$.
See Figure~\ref{fig:segment} for an example.

The following lemma is the crucial part of the proof of Theorem \ref{thm:upper-bound-2d}.

\begin{lemma}
\label{lem:rooted-cover}
If\/ $R$ is a rooted set, then every point\/ $A\in R$ can be assigned a measurable set\/ $\fT(A)\subseteq\bbR^2$ so that the following is satisfied:
\begin{enumerate}
\item $\lambda_2(\fT(A))<87\smc(R)$;
\item for every line segment\/ $\overline{BC}$ in\/ $R$, we have either\/ $B\in\fT(C)$ or\/ $C\in\fT(B)$;
\item the set\/ $\{(A,B)\colon A\in R$ and\/ $B\in\fT(A)\}$ is measurable.
\end{enumerate}
\end{lemma}

\begin{proof}
Let $P$ be a body of $R$ with the root~$r$.
First, we show that $P$ is entirely contained in one closed half-plane defined by the supporting line of~$r$.
Let $h^-$ and $h^+$ be the two open half-planes defined by the supporting line of~$r$.
According to the definition of a rooted set, the sets $\{D\in r\colon\exists\varepsilon>0\colon\nei{\varepsilon}{D}\cap h^-=\nei{\varepsilon}{D}\cap(P\setminus r)\}$ and $\{D\in r\colon\exists\varepsilon>0\colon\nei{\varepsilon}{D}\cap h^+=\nei{\varepsilon}{D}\cap(P\setminus r)\}$ are open and partition the entire $r$, hence one of them must be empty.
This implies that the segments connecting $r$ to $P\setminus r$ lie all in $h^-$ or all in~$h^+$.
Since $P=\Vis(r,P)$, we conclude that $P\subseteq h^-$ or $P\subseteq h^+$.

According to the above, we can rotate and translate the set $R$ so that $r$ lies on the $x$-axis and $P$ lies in the half-plane $\{B\in\bbR^2\colon y(B)\geq 0\}$.
For a point $A\in R$, we use $d(A,r)$ to denote the $y$-coordinate of $A$ after such a rotation and translation of~$R$.
We use $d(A)$ to denote $d(A,r)$ where $r$ is the root of the body of~$A$.
It follows that $d(A)\geq 0$ for every $A\in R$.

Let $\gamma\in(0,1)$ be a fixed constant whose value will be specified at the end of the proof.
For a point $A\in R$, we define sets
\begin{align*}
\fV_1(A)&\colonequals\{B\in\Vis(A,R)\colon|A'B'|\geq\gamma|AB|,\:A\in\Vis(r'',R),\:d(A',r'')\geq d(B',r'')\},\\
\fV_2(A)&\colonequals\{B\in\Vis(A,R)\colon|A'B'|\geq\gamma|AB|,\:A\notin\Vis(r'',R),\:d(A',r'')\geq d(B',r'')\},\\
\fV_3(A)&\colonequals\{B\in\Vis(A,R)\colon|A'B'|<\gamma|AB|,\:|AA'|\geq|BB'|\},
\end{align*}
where $r''$ denotes the root of the base body of $\overline{AB}$ and $A'$ and $B'$ denote the endpoints of the base segment of $\overline{AB}$ such that $|AA'|<|AB'|$.
For every $A\in R$, the sets $\fV_1(A)$, $\fV_2(A)$, and $\fV_3(A)$ are pairwise disjoint.
Moreover, we have $A\in\bigcup_{i=1}^3\fV_i(B)$ or $B\in\bigcup_{i=1}^3\fV_i(A)$ for every line segment $\overline{AB}$ in~$R$.
If for some $B\in\bigcup_{i=1}^3\fV_i(A)$ the point $A$ lies on $r''$, then we have $B\in\fV_1(A)$ and $\fV_1(A)\subseteq r''$.

For the rest of the proof, we fix a point $A\in R$.
We show that the union $\bigcup_{i=1}^3\fV_i(A)$ is contained in a measurable set $\fT(A)\subseteq\bbR^2$ with $\lambda_2(\fT(A))<87\smc(R)$ that is a union of three trapezoids.
We let $P$ be the body of $A$ and $r$ be the root of~$P$.
If $P$ is a $k$-body with $k\geq 2$, then we use $r'$ to denote the root of the parent body of~$P$.

\begin{claim}
\label{claim1}
\itshape
$\fV_1(A)$ is contained in a trapezoid\/ $\fT_1(A)$ with area\/ $6\gamma^{-2}\smc(R)$.
\end{claim}

Let $H$ be a point of $r$ such that $\overline{AH}\subseteq R$.
Let $T'$ be the $r$-parallel trapezoid of height $d(A)$ with bases of length $\smash[b]{\frac{8\smc(R)}{d(A)}}$ and $\smash[b]{\frac{4\smc(R)}{d(A)}}$ such that $A$ is the center of the larger base and $H$ is the center of the smaller base.
The homothety with center $A$ and ratio $\gamma^{-1}$ transforms $T'$ into the trapezoid $T\colonequals A+\gamma^{-1}(T'-A)$.
Since the area of $T'$ is $6\smc(R)$, the area of $T$ is $6\gamma^{-2}\smc(R)$.
We show that $\fV_1(A)\subseteq T$.
See Figure~\ref{fig:claim1} for an illustration.

\begin{figure}[t]
\centering
\includegraphics{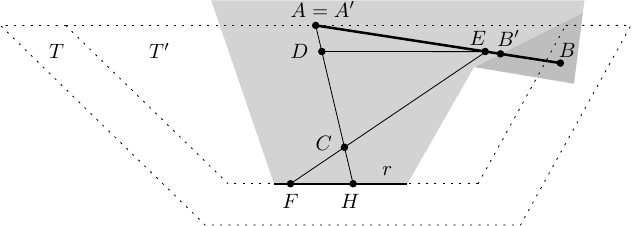}
\caption{Illustration for the proof of Claim~\ref{claim1} in the proof of Lemma \ref{lem:rooted-cover}.}
\label{fig:claim1}
\end{figure}

Let $B$ be a point in $\fV_1(A)$.
Using a similar approach to the one used by Cabello et~al.\ \cite{CCK+14} in the proof of Theorem \ref{thm:previous-star-shaped}, we show that $B\in T$.
Let $A'B'$ be the base segment of $\overline{AB}$ such that $|AA'|<|AB'|$.
Since $B\in\fV_1(A)$, we have $|A'B'|\geq\gamma|AB|$, $A\in\Vis(r'',R)$, and $d(B,r'')\leq d(A,r'')$, where $r''$ denotes the root of the base level of $\overline{AB}$.
Since $A$ is visible from $r''$ in $R$, the base body of $\overline{AB}$ is the body of $A$ and thus $A=A'$ and $r=r''$.
As we have observed, every point $C\in\{A\}\cup AB'$ satisfies $d(C,r)=d(C)\geq 0$.

Let $\varepsilon>0$.
There is a point $E\in AB'$ such that $|B'E|<\varepsilon$.
Since $E$ lies on the base segment of $\overline{AB}$, there is $F\in r$ such that $\overline{EF}\subseteq R$.
It is possible to choose $F$ so that $\overline{AH}$ and $\overline{EF}$ have a point $C$ in common where $C\neq F,H$.
Let $D$ be a point of $\overline{AH}$ with $d(D)=d(E)$.
The point $D$ exists, as $d(H)=0\leq d(E)\leq d(A)$.
The points $A,E,F,H$ form a self-intersecting tetragon $Q$ whose boundary is contained in~$R$.
Since $R$ is simply connected, the interior of $Q$ is contained in $R$ and the triangles $ACE$ and $CFH$ have area at most $\smc(R)$.

The triangle $ACE$ is partitioned into triangles $ADE$ and $CDE$ with areas $\frac{1}{2}(d(A)-d(D))|DE|$ and $\frac{1}{2}(d(D)-d(C))|DE|$, respectively.
Therefore, we have $\frac{1}{2}(d(A)-d(C))|DE|=\lambda_2(ACE)\leq\smc(R)$.
This implies
\begin{equation*}
|DE|\leq\frac{2\smc(R)}{d(A)-d(C)}.
\end{equation*}
For the triangle $CFH$, we have $\frac{1}{2}d(C)|FH|=\lambda_2(CFH)\leq\smc(R)$.
By the similarity of the triangles $CFH$ and $CDE$, we have $|FH|=|DE|d(C)/(d(E)-d(C))$ and therefore
\begin{equation*}
|DE|\leq\frac{2\smc(R)}{d(C)^2}(d(E)-d(C)).
\end{equation*}
Since the first upper bound on $|DE|$ is increasing in $d(C)$ and the second is decreasing in $d(C)$, the minimum of the two is maximized when they are equal, that is, when $d(C)=d(A)d(E)/(d(A)+d(E))$.
Then we obtain $|DE|\leq\smash[b]{\frac{2\smc(R)}{d(A)^2}}(d(A)+d(E))$.
This and $0\leq d(E)\leq d(A)$ imply $E\in T'$.
Since $\varepsilon$ can be made arbitrarily small and $T'$ is compact, we have $B'\in T'$.
Since $|AB'|\geq\gamma|AB|$, we conclude that $B\in T$.
This completes the proof of Claim~\ref{claim1}.

\begin{claim}
\label{claim2}
\itshape
$\fV_2(A)$ is contained in a trapezoid\/ $\fT_2(A)$ with area\/ $3(1-\gamma)^{-2}\gamma^{-2}\smc(R)$.
\end{claim}

We assume the point $A$ is not contained in the first level of $R$, as otherwise $\fV_2(A)$ is empty.
Let $p$ be the $r'$-parallel line that contains the point $A$ and let $q$ be the supporting line of~$r$.
Let $p^+$ and $q^+$ denote the closed half-planes defined by $p$ and $q$, respectively, such that $r'\subseteq p^+$ and $A\notin q^+$.
Let $O$ be the intersection point of $p$ and~$q$.

Let $T'\subseteq p^+\cap q^+$ be the trapezoid of height $d(A,r')$ with one base of length $\smash[b]{\frac{4\smc(R)}{(1-\gamma)^2d(A,r')}}$ on $p$, the other base of length $\smash[b]{\frac{2\smc(R)}{(1-\gamma)^2d(A,r')}}$ on the supporting line of $r'$, and one lateral side on~$q$.
The homothety with center $O$ and ratio $\gamma^{-1}$ transforms $T'$ into the trapezoid $T\colonequals O+\gamma^{-1}(T'-O)$.
Since the area of $T'$ is $3(1-\gamma)^{-2}\smc(R)$, the area of $T$ is $3(1-\gamma)^{-2}\gamma^{-2}\smc(R)$.
We show that $\fV_2(A)\subseteq T$.
See Figure~\ref{fig:claim2} for an illustration.

\begin{figure}[t]
\centering
\includegraphics{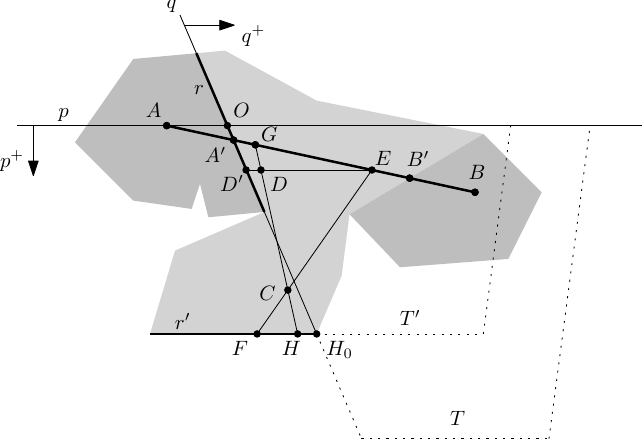}
\caption{Illustration for the proof of Claim~\ref{claim2} in the proof of Lemma \ref{lem:rooted-cover}.}
\label{fig:claim2}
\end{figure}

Let $B$ be a point of $\fV_2(A)$.
We use $A'B'$ to denote the base segment of $\overline{AB}$ such that $|AA'|<|AB'|$.
By the definition of $\fV_2(A)$, we have $|A'B'|\geq\gamma|AB|$, $A\notin\Vis(r'',R)$, and $d(B,r'')\leq d(A,r'')$, where $r''$ denotes the root of the base body of $\overline{AB}$.
By Lemma~\ref{lem:segment} and the fact that $A\notin\Vis(r'',R)$, we have $r'=r''$.
The bound $d(A,r')\geq d(B,r')$ thus implies $A'\in r\cap p^+$ and $B\in q^+$.
We have $d(C,r')=d(C)\geq 0$ for every $C\in A'B'$.

Observe that $(1-\gamma)d(A,r')\leq d(A',r')\leq d(A,r')$.
The upper bound is trivial, as $d(B,r')\leq d(A,r')$ and $A'$ lies on $\overline{AB}$.
For the lower bound, we use the expression $A'=tA+(1-t)B'$ for some $t\in[0,1]$.
This gives us $d(A',r')=td(A,r')+(1-t)d(B',r')$.
By the estimate $|A'B'|\geq\gamma|AB|$, we have
\begin{equation*}
|AA'|+|BB'|\leq(1-\gamma)|AB|=(1-\gamma)(|AB'|+|BB'|).
\end{equation*}
This can be rewritten as $|AA'|\leq(1-\gamma)|AB'|-\gamma|BB'|$.
Consequently, $|BB'|\geq 0$ and $\gamma>0$ imply $|AA'|\leq(1-\gamma)|AB'|$.
This implies $t\geq 1-\gamma$.
Applying the bound $d(B',r')\geq 0$, we conclude that $d(A',r')\geq(1-\gamma)d(A,r')$.

Let $(G_n)_{n\in\bbN}$ be a sequence of points from $A'B'$ that converges to~$A'$.
For every $n\in\bbN$, there is a point $H_n\in r'$ such that $\overline{G_nH_n}\subseteq R$.
Since $\overline{r'}$ is compact, there is a subsequence of $(H_n)_{n\in\bbN}$ that converges to a point $H_0\in\overline{r'}$.
We claim that $H_0\in q$.
Suppose otherwise, and let $q'\neq q$ be the supporting line of $\overline{A'H_0}$.
Let $\varepsilon>0$ be small enough so that $\nei{\varepsilon}{A'}\subseteq R$.
For $n$ large enough, $\overline{G_nH_n}$ is contained in an arbitrarily small neighborhood of~$q'$.
Consequently, for $n$ large enough, the supporting line of $\overline{G_nH_n}$ intersects $q$ at a point $K_n$ such that $\overline{G_nK_n}\subseteq\nei{\varepsilon}{A'}$, which implies $K_n\in r\cap\Vis(r',R)$, a contradiction.

Again, let $\varepsilon>0$.
There is a point $E\in A'B'$ such that $|B'E|<\varepsilon$.
Let $D'$ be a point of $q$ with $d(D',r')=d(E)$, and let $\delta>0$.
There are points $G\in A'B'$ and $H\in r'$ such that $G\in\nei{\delta}{A'}$ and $\overline{GH}\subseteq R\cap\nei{\delta}{q}$.
If $\delta$ is small enough, then $d(E)\leq d(A',r')-\delta\leq d(G)\leq d(A',r')$.
Let $D$ be the point of $\overline{GH}$ with $d(D)=d(E)$.
The point $E$ lies on $A'B'$ and thus it is visible from a point $F\in r'$.
Again, we can choose $F$ so that the line segments $\overline{EF}$ and $\overline{GH}$ have a point $C$ in common where $C\neq F,H$.
The points $E,F,H,G$ form a self-intersecting tetragon $Q$ whose boundary is in~$R$.
The interior of $Q$ is contained in $R$, as $R$ is simply connected.
Therefore, the area of the triangles $CEG$ and $CFH$ is at most $\smc(R)$.

The argument used in the proof of Claim~\ref{claim1} yields
\begin{equation*}
|DE|\leq\frac{2\smc(R)}{d(G)^2}(d(G)+d(E))\leq\frac{2\smc(R)}{(d(A',r')-\delta)^2}(d(A',r')+d(E)).
\end{equation*}
This and the fact that $\delta$ (and consequently $|D'D|$) can be made arbitrarily small yield $|D'E|\leq\frac{2\smc(R)}{d(A',r')^2}(d(A',r')+d(E))$.
This and $d(A',r')\geq(1-\gamma)d(A,r')$ yield $|D'E|\leq\frac{2\smc(R)}{(1-\gamma)^2d(A,r')^2}(d(A,r')+d(E))$.
This and $0\leq d(E)\leq d(A)$ imply $E\in T'$.

Since $\varepsilon$ can be made arbitrarily small and $T'$ is compact, we have $B'\in T'$.
Since $|A'B'|\geq\gamma|AB|\geq\gamma|A'B|$, we conclude that $B\in T$.
This completes the proof of Claim~\ref{claim2}.

\begin{claim}
\label{claim3}
\itshape
$\fV_3(A)$ is contained in a trapezoid\/ $\fT_3(A)$ with area\/ $(4(1-\gamma)^{-2}-1)\smc(R)$.
\end{claim}

By Lemma \ref{lem:visibility}, the points of $r$ that are visible from $A$ in $R$ form a subsegment $CD$ of~$r$.
The homothety with center $A$ and ratio $2(1-\gamma)^{-1}$ transforms the triangle $T'\colonequals ACD$ into the triangle $T''\colonequals A+2(1-\gamma)^{-1}(T'-A)$.
See Figure~\ref{fig:claim3} for an illustration.
We claim that $\fV_3(A)$ is a subset of the trapezoid $T\colonequals T''\setminus T'$.

Let $B$ be an arbitrary point of $\fV_3(A)$.
Consider the segment $\overline{AB}$ with the base segment $A'B'$ such that $|AA'|<|AB'|$.
Since $B\in\fV_3(A)$, we have $|A'B'|<\gamma|AB|$ and $|AA'|\geq|BB'|$.
This implies $|AA'|\geq\frac{1-\gamma}{2}|AB|>0$ and hence $A\neq A'$ and $B\notin P$.
From the definition of $C$ and $D$, we have $A'\in\overline{CD}$.
Since $|AA'|\geq\frac{1-\gamma}{2}|AB|$ and $B\notin P$, we have $B\in T$.

The area of $T$ is $(4(1-\gamma)^{-2}-1)\lambda_2(T')$.
The interior of $T'$ is contained in $R$, as all points of the open segment $CD$ are visible from $A$ in~$R$.
The area of $T'$ is at most $\smc(R)$, as its interior is a convex subset of~$R$.
Consequently, the area of $T$ is at most $(4(1-\gamma)^{-2}-1)\smc(R)$.
This completes the proof of Claim~\ref{claim3}.

\begin{figure}[t]
\centering
\includegraphics{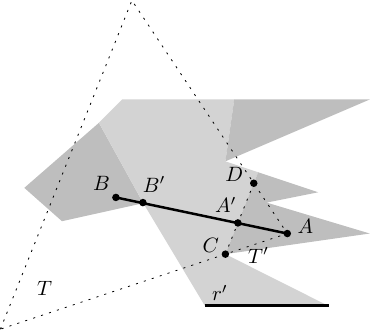}
\caption{Illustration for the proof of Claim~\ref{claim3} in the proof of Lemma \ref{lem:rooted-cover}.}
\label{fig:claim3}
\end{figure}

To put everything together, we set $\fT(A)\colonequals\bigcup_{i=1}^3\fT_i(A)$.
Then, it follows that $\bigcup_{i=1}^3\fV_i(A)\subseteq\fT(A)$ for every $A\in R$.
Clearly, the set $\fT(A)$ is measurable.
Summing the three estimates on areas of the trapezoids, we obtain
\begin{equation*}
\lambda_2(\fT(A))\leq\bigl(6\gamma^{-2}+3(1-\gamma)^{-2}\gamma^{-2}+4(1-\gamma)^{-2}-1\bigr)\smc(R)
\end{equation*}
for every point $A\in R$.
We choose $\gamma\in(0,1)$ so that the value of the coefficient is minimized.
For $x\in(0,1)$, the function $x\mapsto 6x^{-2}+3(1-x)^{-2}x^{-2}+4(1-x)^{-2}-1$ attains its minimum $86.7027<87$ at $x\approx 0.5186$.
Altogether, we have $\lambda_2(\fT(A))<87\smc(R)$ for every $A\in R$.

It remains to show that the set $\{(A,B)\colon A\in R$ and $B\in\fT(A)\}$ is measurable.
For every body $P$ of $R$ and for $i\in\{1,2,3\}$, the definition of the trapezoid $\fT_i(A)$ in Claim~$i$ implies that the set $\{(A,B)\colon A\in P$ and $B\in\fT_i(A)\}$ is the intersection of $P\times\bbR^2$ with a semialgebraic (hence measurable) subset of $(\bbR^2)^2$ and hence is measurable.
There are countably many bodies of $R$, as each of them has positive measure.
Therefore, $\{(A,B)\colon A\in R$ and $B\in\fT(A)\}$ is a countable union of measurable sets and hence is measurable.
\end{proof}

Let $S$ be a bounded open subset of the plane, and let $\ell$ be a diagonal of $S$ that lies on the $x$-axis.
For a point $A\in S$, we define the set
\begin{equation*}
\fS(A,\ell)\colonequals\{B\in\Vis(A,S)\colon AB\cap\ell\neq\emptyset\text{ and }|y(A)|\geq|y(B)|\}.
\end{equation*}
The following lemma is a slightly more general version of a result of Cabello et~al.\ \cite{CCK+14}.

\begin{lemma}
\label{lem:crossing-visibility}
Let\/ $S$ be a bounded open simply connected subset of\/ $\bbR^2$, and let\/ $\ell$ be its diagonal.
It follows that\/ $\lambda_2(\fS(A,\ell))\leq 3\smc(S)$ for every\/ $A\in S$.
\end{lemma}

\begin{proof}
We can assume without loss of generality that $\ell$ lies on the $x$-axis.
Using an argument similar to the proof of Lemma \ref{lem:open}, we can show that the set $\{B\in\Vis(A,S)\colon AB\cap\ell\neq\emptyset\}$ is open.
Therefore, $\fS(A,\ell)$ is the intersection of an open set and the closed half-plane $\{(x,y)\in\bbR^2\colon y\leq-y(A)\}$ or $\{(x,y)\in\bbR^2\colon y\geq-y(A)\}$, whichever contains~$A$.
Consequently, the set $\fS(A,\ell)$ is measurable for every $A\in S$.

We clearly have $\lambda_2(\fS(A,\ell))=0$ for points $A\in S\setminus\Vis(\ell,S)$.
By Lemma \ref{lem:visibility}, the set $\Vis(A,S)\cap\ell$ is an open subsegment $CD$ of~$\ell$.
The interior $\inte{T}$ of the triangle $T\colonequals ACD$ is contained in $S$.
Since $\inte{T}$ is a convex subset of $S$, we have $\lambda_2(\inte{T})=\frac{1}{2}|CD|\cdot|y(A)|\leq\smc(S)$.
Therefore, every point $B\in\fS(A,\ell)$ is contained in a trapezoid of height $|y(A)|$ with bases of length $|CD|$ and $2|CD|$.
The area of this trapezoid is $\frac{3}{2}|CD|\cdot|y(A)|\leq 3\smc(S)$.
Hence we have $\lambda_2(\fS(A,\ell))\leq 3\smc(S)$ for every point $A\in S$.
\end{proof}

\begin{proof}[Proof of Theorem \ref{thm:upper-bound-2d}]
In view of Lemma \ref{lem:reduction}, we can assume without loss of generality that $S$ is a bounded open simply connected set.
Let $\ell$ be a diagonal of~$S$.
We can assume without loss of generality that $\ell$ lies on the $x$-axis.

According to Lemma \ref{lem:cutting}, the set $S\setminus\ell$ has exactly two p-components $S_1$ and $S_2$, the sets $S_1\cup\ell$ and $S_2\cup\ell$ are rooted sets, and $\ell$ is their common root.
By Lemma~\ref{lem:rooted-cover}, for $i\in\{1,2\}$, every point $A\in S_i\cup\ell$ can be assigned a measurable set $\fT_i(A)$ so that $\lambda_2(\fT_i(A))<87\smc(S_i\cup\ell)\leq 87\smc(S)$, every line segment $\overline{BC}$ in $S_i\cup\ell$ satisfies $B\in\fT_i(C)$ or $C\in\fT_i(B)$, and the set $\{(A,B)\colon A\in S_i\cup\ell$ and $B\in\fT_i(A)\}$ is measurable.

We set $\fS(A)\colonequals\fT_i(A)\cup\fS(A,\ell)$ for every point $A\in S_i$ with $i\in\{1,2\}$.
We set $\fS(A)\colonequals\fT_1(A)\cup\fT_2(A)$ for every point $A\in\ell=S\setminus(S_1\cup S_2)$.
Let
\begin{equation*}
\fS\colonequals\{(A,B)\colon A\in S\text{ and }B\in\fS(A)\}\cup\{(B,A)\colon A\in S\text{ and }B\in\fS(A)\}\subseteq(\bbR^2)^2.
\end{equation*}
It follows that the set $\fS$ is measurable.

Let $\overline{AB}$ be a line segment in $S$, and suppose $|y(A)|\geq|y(B)|$.
Then either $A$ and $B$ are in distinct p-components of $S\setminus\ell$ or they both lie in the same component $S_i$ with $i\in\{1,2\}$.
In the first case, we have $B\in\fS(A)$, since $AB$ intersects $\ell$ and $\fS(A,\ell)\subseteq\fS(A)$.
In the second case, we have $B\in\fT_i(A)\subseteq\fS(A)$ or $A\in\fT_i(B)\subseteq\fS(B)$.
Therefore, we have $\Seg(S)\subseteq\fS$.
Since both $\Seg(S)$ and $\fS$ are measurable, we have
\begin{equation*}
\lambda_4(\Seg(S))\leq\lambda_4(\fS)\leq 2\int_{A\in S}\lambda_2(\fS(A)),
\end{equation*}
where the second inequality is implied by Fubini's Theorem.
The bound $\lambda_2(\fS(A))\leq 90\smc(S)$ implies
\begin{equation*}
\lambda_4(\Seg(S))\leq 2\int_S 90\smc(S)=180\smc(S)\lambda_2(S).
\end{equation*}
Finally, this bound can be rewritten as $\beer(S)=\lambda_4(\Seg(S))\lambda_2(S)^{-2}\leq 180\conv(S)$.
\end{proof}

\subsection{Proof of Lemma \ref{lem:reduction}}
\label{ssec:reduction}

In this section, we prove Lemma \ref{lem:reduction}, which reduces the general setting of Theorem~\ref{thm:upper-bound-2d} to the case that $S$ is a bounded open simply connected subset of~$\bbR^2$.

\begin{lemma}
\label{lem:bounded}
Let\/ $S\subseteq\bbR^2$ be a set whose\/ $\beer(S)$ is defined.
For every\/ $\varepsilon>0$, there is a bounded set\/ $S'\subseteq S$ such that\/ $\lambda(S')\geq(1-\varepsilon)\lambda(S)$ and\/ $\beer(S')\geq\beer(S)-\varepsilon$.
Moreover, if\/ $S$ is p-componentwise simply connected, then so is~$S'$.
\end{lemma}

\begin{proof}
Let $B$ be an open ball in $\bbR^2$ centered at the origin.
Consider the sets $S'=S\cap B$ and $S_0=S\setminus B$ partitioning the set $S$.
Fix the radius of $B$ large enough, so that $S_0$ has measure at most $\varepsilon\lambda(S)/2$.
We claim that $S'$ has the properties stated in the lemma.

Clearly $\lambda(S')\geq(1-\varepsilon/2)\lambda(S)>(1-\varepsilon)\lambda(S)$.
Moreover, $\Seg(S')=\Seg(S)\setminus((S_0\times S)\cup(S\times S_0))$, and hence $\Seg(S')$ is measurable and we have $\lambda_4(\Seg(S'))\geq\lambda_4(\Seg(S))-\varepsilon\lambda(S)^2$.
Therefore,
\begin{equation*}
\beer(S')=\frac{\lambda_4(\Seg(S'))}{\lambda_4(S'\times S')}\geq\frac{\lambda_4(\Seg(S'))}{\lambda_4(S\times S)}\geq\frac{\lambda_4(\Seg(S))-\varepsilon\lambda(S)^2}{\lambda_4(S\times S)}=\beer(S)-\varepsilon,
\end{equation*}
as claimed.
It is clear from the construction that if $S$ is p-componentwise simply connected, then so is~$S'$.
\end{proof}

\begin{lemma}
\label{lem:boundary}
Let\/ $S\subseteq\bbR^2$ be a bounded p-componentwise simply connected measurable set with measurable segment set.
Then\/ $\lambda_4(\Seg(S)\setminus\Seg(\inte{S}))=0$.
In other words, all the segments in\/ $S$ are in fact contained in\/ $\inte{S}$, except for a set of measure zero.
\end{lemma}

\begin{proof}
Let $\fB$ denote the set $\Seg(S)\setminus\Seg(\inte{S})$, that is, $\fB$ is the set of segments in $S$ containing at least one point of~$\bd S$.
Note that $\fB$ is measurable, since $\Seg(S)$ is measurable by assumption and $\Seg(\inte{S})$ is an open set by Lemma \ref{lem:open}, hence it is measurable as well.

Let $\overline{AB}$ be a segment contained in $S$, and let $C$ be a point of $\overline{AB}$.
We say that $C$ is an \emph{isolated boundary point} of the segment $\overline{AB}$, if $C\in\bd S$, but there is an $\varepsilon>0$ such that no other point of $\overline{AB}\cap\nei{\varepsilon}{C}$ belongs to~$\bd S$.

We partition the set $\fB$ into four parts as follows:
\begin{align*}
\fBv&\colonequals\{(A,B)\in\fB\colon A=B\text{ or }\overline{AB}\text{ is a vertical segment}\},\\
\fBl&\colonequals\{(A,B)\in\fB\setminus\fBv\colon A\text{ is an isolated boundary point of }\overline{AB}\},\\
\fBr&\colonequals\{(A,B)\in\fB\setminus(\fBv\cup\fBl)\colon B\text{ is an isolated boundary point of }\overline{AB}\},\\
\fBo&\colonequals\fB\setminus(\fBv\cup\fBl\cup\fBr).
\end{align*}
We claim that each of these sets has measure zero.
For $\fBv$, this is clear, since $\fBv$ is a subset of $\{(A,B)\in\bbR^2\times\bbR^2\colon A=B\text{ or }\overline{AB}\text{ is a vertical segment}\}$, which clearly has $\lambda_4$-measure zero.

Consider now the set $\fBl$. We first argue that it is measurable.
For a set $\alpha\subseteq [0,1]$ and a pair of points $(A,B)$, define $\overline{AB}[\alpha]\colonequals\{tB+(1-t)A\colon t\in\alpha\}$, and let $\fS(\alpha)$ be the set $\{(A,B)\in\bbR^2\times\bbR^2\colon\overline{AB}[\alpha]\subseteq\inte{S}\}$.
In particular, if $\alpha=[0,1]$ then $\overline{AB}[\alpha]=\overline{AB}$ and $\fS(\alpha)=\Seg(\inte{S})$.
If $\alpha$ is a closed interval, then $\overline{AB}[\alpha]$ is a segment, and it is not hard to see that $\fS(\alpha)$ is an open set, and, in particular, it is measurable.
If $\alpha$ is an open interval, say $\alpha=(s,t)\subseteq[0,1]$, then $\fS(\alpha)=\bigcap_{n\in\bbN}\fS([s+n^{-1},t-n^{-1}])$, and hence $\fS(\alpha)$ is measurable as well.
We then see that
\begin{equation*}
\fBl=\fB\cap(\bd{S}\times S)\cap\Bigl(\bigcup_{n\in\bbN}\fS((0,n^{-1}))\Bigr),
\end{equation*}
showing that $\fBl$ is measurable.
An analogous argument shows that $\fBr$ is measurable, and hence $\fBo$ is measurable as well.

In the rest of the proof, we will use two basic facts of integral calculus, which we now state explicitly.

\begin{fact}[{see \cite[Lemma 7.25 and Theorem 7.26]{Rud87}}]
\label{fact1}
Let\/ $X,Y\subseteq\bbR^d$ be two open sets, and let\/ $\sigma\colon X\to Y$ be a bijection such that both\/ $\sigma$ and\/ $\sigma^{-1}$ are continuous and differentiable on\/ $X$ and\/ $Y$, respectively.
Then, for any\/ $X_0\subseteq X$, the set\/ $X_0$ is measurable if and only if\/ $\sigma(X_0)$ is measurable.
Moreover, $\lambda(X_0)=0$ if and only if\/ $\lambda(\sigma(X_0))=0$.
\end{fact}

\begin{fact}[Fubini's Theorem, see {\cite[Theorem 8.12]{Rud87}}]
Let\/ $M\subseteq\bbR^k\times\bbR^\ell$ be a measurable set.
For\/ $x\in\bbR^k$, define\/ $M_x\colonequals\{y\in\bbR^\ell\colon(x,y)\in M\}$.
Then, for almost every\/ $x\in\bbR^k$, the set\/ $M_x$ is\/ $\lambda_\ell$-measurable, and
\begin{equation*}
\lambda_{k+\ell}(M)=\int_{x\in\bbR^k}\lambda_\ell(M_x).
\end{equation*}
\end{fact}

Let us prove that $\lambda_4(\fBl)=0$.
The basic idea is as follows: suppose that we have fixed a non-vertical line $L$ and a point $B\in L$.
It can be easily seen that there are at most countably many points $A\in L$ such that $(A,B)\in\fBl$.
Since a line $L$ with a point $B\in L$ can be determined by three parameters, we will see that $\fBl$ has $\lambda_4$-measure zero.

Let us describe this reasoning more rigorously. Let $L_{a,b}$ denote the line $\{(x,y)\in\bbR^2\colon y=ax+b\}$.
Define a mapping $\sigma\colon\bbR^4\to\bbR^2\times\bbR^2$ as follows: $\sigma(a,b,x,x')=(A,B)$, where $A=(x,ax+b)$ and $B=(x',ax'+b)$.
In other words, $\sigma(a,b,x,x')$ is the pair of points on the line $L_{a,b}$ whose horizontal coordinates are $x$ and $x'$, respectively.
For every non-vertical segment $\overline{AB}$, there is a unique quadruple $(a,b,x,x')$ with $x\neq x'$, such that $\sigma(a,b,x,x')=(A,B)$.
In particular, $\sigma$ is a bijection from the set $\{(a,b,x,x')\in\bbR^4\colon x\neq x'\}$ to the set $\{(A,B)\in\bbR^2\times\bbR^2\colon A,B$ not on the same vertical line$\}$.

Define $\widehat\fBl=\sigma^{-1}(\fBl)$.
Note that $\sigma$ satisfies the assumptions of Fact~\ref{fact1}, and therefore $\widehat\fBl$ is measurable.
Moreover, $\lambda_4(\widehat\fBl)=0$ if and only if $\lambda_4(\fBl)=0$.

For a fixed triple $(a,b,x')\in\bbR^3$, let $X_{a,b,x'}$ denote the set $\{x\in\bbR\colon(a,b,x,x')\in\widehat\fBl\}$.
We claim that $X_{a,b,x'}$ is countable.
To see this, choose a point $x\in X_{a,b,x'}$ and define $(A,B)\colonequals\sigma(a,b,x,x')$.
Since $(A,B)\in\fBl$, we know that $A$ is an isolated boundary point of $\overline{AB}$, which implies that there is a closed interval $\beta\subseteq\bbR$ of positive length such that $\beta\cap X_{a,b,x'}=\{x\}$.
This implies that $X_{a,b,x'}$ is countable and thus of measure zero.

Since $\widehat\fBl$ is measurable, we can apply Fubini's Theorem to get
\begin{equation*}
\lambda_4(\widehat\fBl)=\int_{(a,b,x')\in\bbR^3}\lambda_1(X_{a,b,x'}).
\end{equation*}
Therefore $\lambda_4(\widehat\fBl)=0$ as claimed.
A similar argument shows that $\lambda_4(\widehat\fBr)=0$.

It remains to deal with the set~$\fBo$.
We will use the following strategy: we will fix two parallel non-horizontal lines $L_1,L_2$, and study the segments orthogonal to these two lines, with one endpoint on $L_1$ and the other on~$L_2$.
Roughly speaking, our goal is to show that for ``almost every'' choice of $L_1$ and $L_2$, there are ``almost no'' segments of this form belonging to~$\fBo$.

Let $L'_{a,b}$ denote the (non-horizontal) line $\{(ay+b,y)\colon y\in\bbR\}$.
Let us say that a pair of distinct points $(A,B)$ has \emph{type $(a,b,c)$}, if $A\in L'_{a,b}$, $B\in L'_{a,c}$, and the segment $\overline{AB}$ is orthogonal to $L'_{a,b}$ (and therefore also to $L'_{a,c}$).
The value $a$ is then called \emph{the slope} of the type $t=(a,b,c)$.

Note that every pair of distinct points $(A,B)$ defining a non-vertical segment has a unique type $(a,b,c)$, with $b\neq c$.
Define a mapping $\tau\colon\bbR^4\to\bbR^2\times\bbR^2$, where $\tau(a,b,c,y)$ is the pair of points $(A,B)$ of type $(a,b,c)$ such that $A=(ay+b,y)$.
Note that $\tau$ is a bijection from the set $\{(a,b,c,y)\in\bbR^4\colon b\neq c\}$ to the set $\{(A,B)\in\bbR^2\times\bbR^2\colon A,B$ not on the same vertical line$\}$.
We can easily verify that $\tau$ satisfies the assumptions of Fact~\ref{fact1}.

Define $\widetilde\fBo=\tau^{-1}(\fBo)$.
From Fact~\ref{fact1}, it follows that $\widetilde\fBo$ is measurable, and $\lambda_4(\fBo)=0$ if and only if $\lambda_4(\widetilde\fBo)=0$.
For a type $t=(a,b,c)\in\bbR^3$, define $Y_t=\{y\in\bbR\colon(a,b,c,y)\in\widetilde\fBo\}$.
Furthermore, for a set $\alpha\subseteq[0,1]$, define $\fBo(\alpha)=\fBo\cap\fS(\alpha)$, $\widetilde\fBo(\alpha)=\tau^{-1}(\fBo(\alpha))$, and $Y_t(\alpha)=\{y\in\bbR\colon(a,b,c,y)\in\widetilde\fBo(\alpha)\}$.
In our applications, $\alpha$ will always be an interval (in fact, an open interval with rational endpoints), and in such case we already know that $\fBo(\alpha)$ is measurable, hence $\widetilde\fBo(\alpha)$ is measurable.

By Fubini's Theorem, we have
\begin{equation}
\label{equation}
\lambda_4(\widetilde\fBo)=\int_{a\in\bbR}\int_{(b,c)\in\bbR^2}\lambda_1(Y_{(a,b,c)}),\tag{$*$}
\end{equation}
and $Y_t$ is measurable for all $t\in\bbR^3$ up to a set of $\lambda_3$-measure zero.
An analogous formula holds for $\widetilde\fBo(\alpha)$ and $Y_t(\alpha)$ for any open interval $\alpha\subseteq[0,1]$ with rational endpoints.
Since there are only countably many such intervals, and a countable union of sets of measure zero has measure zero, we know that there is a set $T_0\subseteq\bbR^3$ of measure zero, such that for all $t\in\bbR^3\setminus T_0$ the set $Y_t$ is measurable, and moreover for any rational interval $\alpha$ the set $Y_t(\alpha)$ is measurable as well.

Our goal is to show that there are at most countably many slopes $a\in\bbR$ for which there is a $(b,c)\in\bbR^2$ such that $\lambda_1(Y_{(a,b,c)})>0$.
From \eqref{equation} it will then follow that $\widetilde\lambda_4(\fBo)=0$.
To achieve this goal, we will show that to any type $t$ for which $\lambda_1(Y_t)>0$, we can assign a set $R_t\subseteq\bd S$ of positive $\lambda_2$-measure (the \emph{region} of $t$), so that if $t$ and $t'$ have different slopes and if $Y_{t}$ and $Y_{t'}$ both have positive measure, then $R_t$ and $R_{t'}$ are disjoint.
Since there cannot be uncountably many disjoint sets of positive measure, this will imply the result.

Let us fix a type $t=(a,b,c)\in\bbR^3\setminus T_0$ such that $\lambda_1(Y_t)>0$.
Let us say that an element $y\in Y_t$ is \emph{half-isolated} if there is an $\varepsilon>0$ such that $[y,y+\varepsilon]\cap Y_t=\{y\}$ or $[y-\varepsilon,y]\cap Y_t=\{y\}$.
Clearly, $Y_t$ has at most countably many half-isolated elements.
Define $Y_t^*\colonequals\{y\in Y_t\colon y$ is not half-isolated$\}$.
Of course, $\lambda_1(Y_t^*)=\lambda_1(Y_t)$.
See Figure~\ref{fig:boundary} for an illustration.

\begin{figure}[t]
\centering
\includegraphics{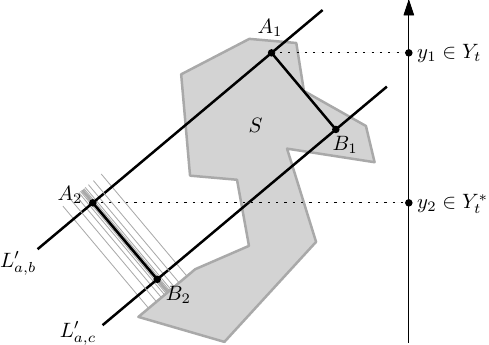}
\caption{An illustration for the proof of Lemma \ref{lem:boundary}.
The element $y_1$ of $Y_t$ is half-isolated while $y_2$ is not.}
\label{fig:boundary}
\end{figure}

Choose $y\in Y_t^*$, and define $(A_y,B_y)\colonequals\tau(a,b,c,y)$.
We claim that $\overline{A_yB_y}\cap\inte{S}$ is either empty or a single interval.
Let us choose any two points $C,D\in\overline{A_yB_y}\cap\inte{S}$.
We will show that the segment $\overline{CD}$ is inside~$\inte{S}$.
For $\varepsilon>0$ small enough, the neighborhoods $\nei{\varepsilon}{C}$ and $\nei{\varepsilon}{D}$ are subsets of~$S$.
Since $y$ is not half-isolated in $Y_t$, we can find two segments $P,Q\in\fBo$ of type $t$ that intersect both $\nei{\varepsilon}{C}$ and $\nei{\varepsilon}{D}$, with $\overline{A_yB_y}$ being between $P$ and~$Q$.
We can then find a closed polygonal curve $\Gamma\subseteq P\cup Q\cup\nei{\varepsilon}{C}\cup\nei{\varepsilon}{D}$ whose interior region contains $\overline{CD}$.
Since $S$ is p-componentwise simply connected, we see that $\overline{CD}\subseteq\inte{S}$.
Therefore, $\overline{A_yB_y}\cap\inte{S}$ is indeed an interval.

Since $\bd S$ is a closed set, we know that for every $y\in Y_t^*$, the set $\overline{A_yB_y}\cap\bd S$ is closed as well.
Moreover, neither $A_y$ nor $B_y$ are isolated boundary points of $\overline{A_yB_y}$, because then $(A_y,B_y)$ would belong to $\fBl$ or~$\fBr$.
We conclude that $\overline{A_yB_y}\cap\bd S$ is either equal to a single closed segment of positive length containing $A_y$ or $B_y$, or it is equal to a disjoint union of two closed segments of positive length, one of which contains $A_y$ and the other contains~$B_y$.

For an integer $n\in\bbN$, define two sets $Y_t^\lhd(n)$ and $Y_t^\rhd(n)$ by
\begin{align*}
Y_t^\lhd(n)&\colonequals\{y\in Y_t^*\colon\overline{A_yB_y}[(0,n^{-1})]\subseteq\bd S\},\\
Y_t^\rhd(n)&\colonequals\{y\in Y_t^*\colon\overline{A_yB_y}[(1-n^{-1},1)]\subseteq\bd S\}.
\end{align*}
Note that these sets are measurable: for instance, $Y_t^\lhd(n)$ is equal to $Y_t^*\setminus\left(\bigcup_\alpha Y_t(\alpha)\right)$, where we take the union over all rational intervals $\alpha$ intersecting $(0,n^{-1})$.
Moreover, we have $Y_t^*=\bigcup_{n\in\bbN}(Y_t^\lhd(n)\cup Y_t^\rhd(n))$.
It follows that there is an $n$ such that $Y_t^\lhd(n)$ or $Y_t^\rhd(n)$ has positive measure.
Fix such an $n$ and assume, without loss of generality, that $\lambda_1(Y_t^\lhd(n))$ is positive.
Define \emph{the region of $t$}, denoted by $R_t$, by
\begin{equation*}
R_t\colonequals\bigcup_{y\in Y_t^\lhd(n)}\overline{A_yB_y}[(0,n^{-1})].
\end{equation*}
The set $R_t$ is a bijective affine image of $Y_t^\lhd(n)\times(0,n^{-1})$, and in particular it is $\lambda_2$-measurable with positive measure.
Note that $R_t$ is a subset of~$\bd S$.

Consider now two types $t,t'\in\bbR^3\setminus T_0$ with distinct slopes, such that both $Y_t$ and $Y_{t'}$ have positive measure.
We will show that the regions $R_t$ and $R_{t'}$ are disjoint.

For contradiction, suppose there is a point $C\in R_t\cap R_{t'}$.
Let $\overline{AB}$ and $\overline{A'B'}$ be the segments containing $C$ and having types $t$ and $t'$, respectively.
Fix $\varepsilon>0$ small enough, so that none of the four endpoints $A,B,A',B'$ lies in $\nei{\varepsilon}{C}$.
Since $Y_t^*$ has no half-isolated points of $Y_t$, we know that $\fBo$ has segments of type $t$ arbitrarily close to $\overline{AB}$ on both sides of $\overline{AB}$, and similarly for segments of type $t'$ close to $\overline{A'B'}$.
We can therefore find four segments $P,Q,P',Q'\in\fBo\setminus\{\overline{AB},\overline{A'B'}\}$ with these properties:
\begin{itemize}
\item $P$ and $Q$ have type $t$, and $P'$ and $Q'$ have type~$t'$.
\item $\overline{AB}$ is between $P$ and $Q$ (i.e., $\overline{AB}\subseteq\Conv(P\cup Q)$) and $\overline{A'B'}$ is between $P'$ and~$Q'$.
\item Both $P$ and $Q$ intersect both $P'$ and $Q'$ inside $\nei{\varepsilon}{C}$.
\end{itemize}

We see that the four points where $P\cup Q$ intersects $P'\cup Q'$ form the vertex set of a parallelogram $W$ whose interior contains the point~$C$.
Moreover, the boundary of $W$ is a closed polygonal curve contained in~$S$.
Since $S$ is p-componentwise simply connected, $W$ is a subset of $S$ and $C$ belongs to~$\inte{S}$.
This is a contradiction, since all points of $R_t$ (and $R_{t'}$) belong to~$\bd S$.

We conclude that $R_t$ and $R_{t'}$ are indeed disjoint.
Since there cannot be uncountably many disjoint sets of positive measure in $\bbR^2$, there are at most countably many values $a\in\bbR$ for which there is a type $t=(a,b,c)$ with $\lambda_1(Y_t)$ positive.
Consequently, the right-hand side of \eqref{equation} is zero, and so $\lambda_4(\fBo)=0$, as claimed.
\end{proof}

\begin{proof}[Proof of Lemma \ref{lem:reduction}]
Observe that the inequalities $\beer(S)\leq\alpha\conv(S)$ and $\lambda_4(\Seg(S))\leq\alpha\smc(S)\cdot\lambda_2(S)$ are equivalent.
Call a set $S$ \emph{bad} if $\Seg(S)$ is measurable and $\beer(S)>\alpha\conv(S)$ or equivalently $\lambda_4(\Seg(S))>\alpha\smc(S)\lambda_2(S)$.
To prove the lemma, we suppose for the sake of contradiction that there exists a bad p-componentwise simply connected set $S\subseteq\bbR^2$ of finite positive measure.

By Lemma \ref{lem:bounded}, for each $\varepsilon>0$, there is a bounded p-componentwise simply connected set $S'\subseteq S$ such that $\lambda_2(S')\geq(1-\varepsilon)\lambda_2(S)$ and $\beer(S')\geq\beer(S)-\varepsilon$.
In particular, such a set $S'$ satisfies $\conv(S')\leq\conv(S)/(1-\varepsilon)$.
Hence, for $\varepsilon$ small enough, the set $S'$ is bad.

Let $S''$ be the interior of~$S'$.
By Lemma \ref{lem:boundary}, $\lambda_4(\Seg(S''))=\lambda_4(\Seg(S'))$.
Clearly, $\lambda_2(S'')\leq\lambda_2(S')$ and $\smc(S'')\leq\smc(S')$, and therefore $S''$ is bad as well.

Note that $S''$ is p-componentwise simply connected.
Since $S''$ is an open set, all its p-components are open as well.
In particular, $S''$ has at most countably many p-components.
Let $\calC$ be the set of p-components of~$S''$.
Each $T\in\calC$ is a bounded open simply connected set, and therefore cannot be bad.
Therefore,
\begin{equation*}
\lambda_4(\Seg(S''))=\sum_{T\in\calC}\lambda_4(\Seg(T))\leq\sum_{T\in\calC}\alpha\smc(T)\lambda_2(T)\leq\alpha\smc(S'')\lambda_2(S''),
\end{equation*}
showing that $S''$ is not bad.
This is a contradiction.
\end{proof}

\subsection{Proof of Lemma \ref{lem:cutting}}
\label{ssec:cutting}

Here we prove Lemma \ref{lem:cutting}, which says that every bounded open simply connected subset of $\bbR^2$ can can be split by a diagonal into two rooted sets.

\begin{lemma}
\label{lem:cutting-aux}
Let\/ $S$ be a bounded open simply connected subset of\/ $\bbR^2$, and let\/ $\ell$ be a diagonal of\/~$S$.
Let\/ $h^-$ and\/ $h^+$ be the open half-planes defined by the supporting line of\/~$\ell$.
It follows that the set\/ $S\setminus\ell$ has exactly two p-components\/ $S_1$ and\/~$S_2$.
Moreover, for every point\/ $A\in\ell$ and every neighborhood\/ $\nei{\varepsilon}{A}\subseteq S$, we have\/ $\nei{\varepsilon}{A}\cap h^-\subseteq S_1$ and\/ $\nei{\varepsilon}{A}\cap h^+\subseteq S_2$.
\end{lemma}

\begin{proof} Notice first that any p-component of an open set is also open.
This implies that any path-connected open set is also p-connected, and therefore every open simply connected set is p-connected as well.

Let $A\in\ell$, and let $\nei{\varepsilon}{A}$ be a neighborhood of $A$ contained in $S$.
We choose arbitrary points $B\in\nei{\varepsilon}{A}\cap h^-$ and $C\in\nei{\varepsilon}{A}\cap h^+$.
Suppose for a contradiction that $S\setminus\ell$ has a single p-component.
Then there exists a polygonal curve $\Gamma$ in $S\setminus\ell$ with endpoints $B$ and~$C$.
Let $\Delta\subseteq S$ be the closed polygonal curve $\Gamma\cup\overline{BC}$.
We can assume that the curve $\Delta$ is simple using a local redrawing argument.
See Figure~\ref{fig:redrawing}.

\begin{figure}[t]
\centering
\includegraphics{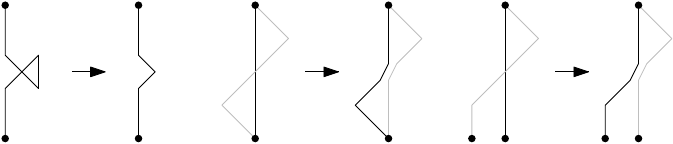}
\caption{Removing self-intersections and intersections between adjacent polygonal lines.}
\label{fig:redrawing}
\end{figure}

The curve $\Delta$ separates $\bbR^2$ into two regions.
The closure $\overline{\ell}$ of the diagonal $\ell$ is a closed line segment that intersects $\Delta$ in exactly one point.
It follows that one endpoint of $\ell$ is in the interior region of~$\Delta$.
Since the endpoints of $\ell$ do not belong to $S$, this contradicts the assumption that $S$ is simply connected.

Now, we show that the set $S\setminus\ell$ has at most two p-components.
For a point $D\in\ell$, let $\nei{\varepsilon}{D}$ be a neighborhood of $D$ in~$S$.
The set $\nei{\varepsilon}{D}\cap h^-$ is contained in a unique p-component $S_1$ of $S\setminus\ell$, and $\nei{\varepsilon}{D}\cap h^+$ is contained in a different p-component~$S_2$.
Choose another point $E\in\ell$ with a neighborhood $\nei{\varepsilon'}{E}\subseteq S$.
We claim that $\nei{\varepsilon'}{E}\cap h^-$ also belongs to~$S_1$.
To see this, note that since $\overline{DE}$ is a compact subset of the open set $S$, it has a neighborhood $\nei{\delta}{\overline{DE}}$ which is contained in~$S$.
Clearly, $\nei{\delta}{\overline{DE}}\cap h^-$ is p-connected and therefore belongs to~$S_1$, hence $\nei{\varepsilon'}{E}\cap h^-$ belongs to $S_1$ as well.
An analogous argument can be made for the half-plane $h^+$ and the p-component~$S_2$.

Since for every p-component $S'$ of $S\setminus\ell$, there is a point $A\in\ell$ and a neighborhood $\nei{\varepsilon}{A}\subseteq S$ such that $\nei{\varepsilon}{A}\cap S'\neq\emptyset$, we see that $S_1$ and $S_2$ are the only two p-components of $S\setminus\ell$.
\end{proof}

\begin{proof}[Proof of Lemma \ref{lem:cutting}]
By Lemma \ref{lem:cutting-aux}, the set $S\setminus\ell$ has of exactly two p-components $S_1$ and~$S_2$.
It remains to show that $S_1\cup\ell$ and $S_2\cup\ell$ are rooted sets.

Since $S_1$ and $S_2$ are p-connected, $S_1\cup\ell$ and $S_2\cup\ell$ are p-connected as well.
To show that $S_1\cup\ell$ and $S_2\cup\ell$ are simply connected, choose a Jordan curve $\Gamma$ in, say, $S_1\cup\ell$, and let $Z$ be the interior region of~$\Gamma$.
Suppose for a contradiction that $Z$ is not a subset of $S_1\cup\ell$.
Since $S$ is simply connected, we have $Z\subseteq S$.
Hence there is a point $A\in Z\cap S_2$.
Since both $S_2$ and $Z$ are open, we can assume that $A$ does not lie on the supporting line of~$\ell$.
Let $\overline{AB}$ be the minimal closed segment parallel to $\ell$ such that $B\in\Gamma$.
Then $B$ belongs to $S_1$, $A$ belongs to $S_2$, and yet $A$ and $B$ are in the same p-component of $S\setminus\ell$.
This contradiction shows that $S_1\cup\ell$ and $S_2\cup\ell$ are simply connected.

As subsets of the bounded set $S$, the sets $S_1\cup\ell$ and $S_2\cup\ell$ are bounded.
Lemma~\ref{lem:cutting-aux} and the fact that $S_i$ is open imply that the set $S_i\cup\ell$ is half-open and $S_i\cap\bd S_i=\ell$ for $i\in\{1,2\}$.
Therefore, the sets $S_1\cup\ell$ and $S_2\cup\ell$ are rooted, and $\ell$ is their root.
\end{proof}

\subsection{Proof of Lemma \ref{lem:branch-structure}}
\label{ssec:branch-structure}

Here we prove Lemma \ref{lem:branch-structure}, which explains the tree structure of rooted sets.
For this entire section, let $R$ be a rooted set and $(L_k)_{k\geq 1}$ be the partition of $R$ into levels.
We will need several auxiliary results in order to prove Lemma \ref{lem:branch-structure}.

For disjoint sets $S,T\subseteq\bbR^2$, we say that the set $S$ is \emph{$T$-half-open} if every point $A\in S$ has a neighborhood $\nei{\varepsilon}{A}$ that satisfies one of the following two conditions:
\begin{enumerate}
\item\label{item:T-half-open-1} $\nei{\varepsilon}{A}\subseteq S$,
\item\label{item:T-half-open-2} $\nei{\varepsilon}{A}\cap\bd S$ is a diameter of $\nei{\varepsilon}{A}$ splitting it into two subsets, one of which (including the diameter) is $\nei{\varepsilon}{A}\cap S$ and the other (excluding the diameter) is $\nei{\varepsilon}{A}\cap T$.
\end{enumerate}
The only difference with the definition of $S$ being half-open is that we additionally specify the ``other side'' of the neighborhoods $\nei{\varepsilon}{A}$ for points $A\in S\cap\bd S$ in the condition~\ref{item:T-half-open-2}.
A rooted set $R$ is $T$-half-open if and only if it is attached to $T$ according to the definition of attachment from Section~\ref{sec:visibility-2d}.

\begin{lemma}
\label{lem:L_1-half-open}
The set\/ $L_1$ is\/ $(\bbR^2\setminus R)$-half-open and\/ $L_1\cap\bd L_1=R\cap\bd R$.
\end{lemma}

\begin{proof}
We consider two cases for a point $A\in L_1$.
First, suppose $A\in L_1\cap\bd R$.
It follows that $A$ has a neighborhood $\nei{\varepsilon}{A}$ that satisfies the condition~\ref{item:half-open-2} of the definition of a half-open set.
By the definition of $L_1$, the same neighborhood $\nei{\varepsilon}{A}$ satisfies the condition~\ref{item:T-half-open-2} for $L_1$ being an $(\bbR^2\setminus R)$-half-open set.
In particular, $A\in\bd L_1$.
Since $R\cap\bd R\subseteq L_1$ by the definition of $L_1$, we have $R\cap\bd R\subseteq L_1\cap\bd L_1$.

Now, suppose $A\in L_1\cap\inte{R}$.
Let $B$ be a point of the root of $R$ such that $\overline{AB}\subseteq R$.
We have $\overline{AB}\setminus\{B\}\subseteq\inte{R}$, as otherwise the point $t'A+(1-t')B$ for $t'\colonequals\sup\{t\in[0,1]\colon At+(1-t)B\in\overline{AB}\cap\bd R\}$ would contradict the fact that $R$ is half-open.
There is a family of neighborhoods $\{\nei{\varepsilon_C}{C}\}_{C\in\overline{AB}}$ such that all $\nei{\varepsilon_D}{D}$ with $D\in\overline{AB}\setminus\{B\}$ satisfy the condition~\ref{item:half-open-1} and $\nei{\varepsilon_B}{B}$ satisfies the condition~\ref{item:half-open-2} for $R$ being half-open.
Since $\overline{AB}$ is compact, there is a finite set $X\subseteq\overline{AB}$ such that $\overline{AB}\subseteq\bigcup_{C\in X}\nei{\varepsilon_C/2}{C}$.
Hence $\nei{\varepsilon}{\overline{AB}}\subseteq\bigcup_{C\in X}\nei{\varepsilon_C}{C}$, where $\varepsilon\colonequals\min_{C\in X}\varepsilon_C/2$.
It follows that $\nei{\varepsilon}{\overline{AB}}\cap\bd R$ is an open segment $Q$ containing $B$ but not $A$ and splitting $\nei{\varepsilon}{\overline{AB}}$ into two subsets, one of which (including $Q$) is $\nei{\varepsilon}{\overline{AB}}\cap R$ and the other (excluding $Q$) is $\nei{\varepsilon}{\overline{AB}}\setminus R$. Let $\varepsilon'$ be the minimum of $\varepsilon$ and the distance of $A$ to the line containing~$Q$.
It follows that $\nei{\varepsilon'}{A}\subseteq\nei{\varepsilon}{\overline{AB}}\cap R$.
Therefore, for every $A'\in\nei{\varepsilon'}{A}$, we have $\overline{A'B}\subseteq R$, hence $\nei{\varepsilon'}{A}\subseteq L_1$.
It also follows that $L_1\cap\bd L_1\subseteq R\cap\bd R$.
\end{proof}

We say that a set $P\subseteq R$ is \emph{$R$-convex} when the following holds for any two points $A,B\in P$: if $\overline{AB}\subseteq R$, then $\overline{AB}\subseteq P$.

\begin{lemma}
\label{lem:L_1-R-convex}
The set\/ $L_1$ is\/ $R$-convex.
\end{lemma}

\begin{proof}
This follows directly from Lemma \ref{lem:visibility}.
\end{proof}

A \emph{branch} of $R$ is a p-component of $\bigcup_{k\geq 2}L_k$.

\begin{lemma}
\label{lem:branch-R-convex}
Every branch of\/ $R$ is\/ $R$-convex.
\end{lemma}

\begin{proof}
Let $P$ be a branch of $R$, and let $A,B\in P$ be such that $\overline{AB}\subseteq R$.
Since $R$ is half-open, it follows that $\overline{AB}\subseteq\inte{R}$.
Suppose $\overline{AB}\not\subseteq P$.
It follows that $\overline{AB}\cap L_1\neq\emptyset$.
Since $L_1$ is $(\bbR^2\setminus R)$-half-open (Lemma \ref{lem:L_1-half-open}) and $R$-convex (Lemma \ref{lem:L_1-R-convex}), we see that $\overline{AB}\cap L_1$ is an open segment $A'B'$ for some $A',B'\in\overline{AB}$.
It follows that $A',B'\in P$.

There is a simple polygonal line in $P$ connecting $A'$ with $B'$, which together with $\overline{A'B'}$ forms a Jordan curve $\Gamma$ in~$R$.
Now, let $C\in A'B'$.
Since $C\in L_1$, there is a point $D$ on the root of $R$ such that $\overline{CD}\subseteq R$.
Since $A',B'\notin L_1$, $D$ does not lie on the supporting line of $A'B'$.
Extend the segment $DC$ beyond $C$ until hitting $\bd R$ at a point~$C'$.
Here we use the fact that $R$ is bounded.
Since $R$ is simply connected, the entire interior region of $\Gamma$ is contained in $R$, so the points $D$ and $C'$ both lie in the exterior region of~$\Gamma$.
However, since $\Gamma\cap L_1=A'B'$, the line segment $\overline{DC'}$ crosses $\Gamma$ at exactly one point, which is~$C$.
This is a contradiction.
\end{proof}

\begin{lemma}
\label{lem:branch-simply-connected}
The set\/ $L_1$ and every branch of\/ $R$ are p-connected and simply connected.
\end{lemma}

\begin{proof}
Let $P$ be the set $L_1$ or a branch of~$R$.
It follows directly from the definitions of $L_1$ and a branch of $R$ that $P$ is p-connected.
To see that $P$ is simply connected, let $\Gamma$ be a Jordan curve in $P$, $A$ be a point in the interior region of $\Gamma$, and $BC$ be an inclusion-maximal open line segment in the interior region of $\Gamma$ such that $A\in BC$.
It follows that $B,C\in\Gamma$ and $\overline{BC}\subseteq R$, as $R$ is simply connected.
Since $B,C\in P$ and $P$ is $R$-convex (Lemmas \ref{lem:L_1-R-convex} and \ref{lem:branch-R-convex}), we have $A\in P$.
\end{proof}

\begin{lemma}
\label{lem:branch-half-open}
Every branch of\/ $R$ is\/ $L_1$-half-open.
\end{lemma}

\begin{proof}
Let $P$ be a branch of~$R$.
It is enough to check the condition~\ref{item:T-half-open-2} for $P$ being $L_1$-half-open for points in $\bd P\cap P$.
Let $A\in\bd P\cap P$.
Since $R$ is half-open, $A$ has a neighborhood $\nei{\varepsilon}{A}$ that satisfies the condition \ref{item:half-open-1} or \ref{item:half-open-2} for $S$ being half-open.
It cannot be \ref{item:half-open-2}, as then $A$ would lie on the root of $R$ and thus in~$L_1$.
Hence $\nei{\varepsilon}{A}\subseteq R$.

Since $L_1$ is $(\bbR^2\setminus R)$-half-open (Lemma \ref{lem:L_1-half-open}) and $R$-convex (Lemma \ref{lem:L_1-R-convex}) and $A\notin L_1$, the set $\nei{\varepsilon}{A}\cap L_1$ lies entirely in some open half-plane $h$ whose boundary line passes through~$A$.
The set $\nei{\varepsilon}{A}\setminus h$ is p-connected and contains $A$, so it lies entirely within~$P$.
The set $\nei{\varepsilon}{A}\cap h$ is disjoint from $P$.
Indeed, if there was a point $B\in\nei{\varepsilon}{A}\cap h\cap P$, then by the $R$-convexity of $P$ (Lemma \ref{lem:branch-R-convex}), the convex hull of $\nei{\varepsilon}{A}\setminus h$ and $B$ would lie entirely within $P$ and would contain $A$ in its interior, which would contradict the assumption that $A\in\bd P$.
It follows that $\nei{\varepsilon}{A}\cap\bd P$ is an open segment that partitions $\nei{\varepsilon}{A}$ into two half-discs, one of which (including $\nei{\varepsilon}{A}\cap\bd P$) is $\nei{\varepsilon}{A}\cap P$.

We show that $\nei{\varepsilon}{A}\cap\bd P\subseteq\bd L_1$.
Suppose to the contrary that there is a point $A'\in\nei{\varepsilon}{A}\cap\bd P\setminus\overline{L_1}$.
It follows that $A'$ has a neighborhood $\nei{\varepsilon'}{A'}\subseteq\nei{\varepsilon}{A}\setminus\overline{L_1}$.
Since $\nei{\varepsilon'}{A'}$ is p-connected and contains a point of $P$, it lies entirely within $P$. This contradicts the assumption that $A'\in\bd P$.

Since $\nei{\varepsilon}{A}\cap\bd P\subseteq\bd L_1$, there is a point $B\in\nei{\varepsilon}{A}\cap L_1$. Let $A'\in\nei{\varepsilon}{A}\cap\bd P$.
Since $L_1$ is $(\bbR^2\setminus R)$-half-open and $R$-convex and $A\notin L_1$, there is a point $C\in\overline{A'B}$ such that $\overline{CB}\setminus\{C\}\subseteq L_1$ while $\overline{A'C}$ is disjoint from~$L_1$.
The latter implies that $\overline{A'C}\subseteq P$, as $A'\in P$.
Hence $C=A'$.
This shows the whole triangle $T$ spanned by $\nei{\varepsilon}{A}\cap\bd P$ and $B$ excluding the open segment $\nei{\varepsilon}{A}\cap\bd P$ is contained in~$L_1$.

Since $A$ lies in the interior of $\nei{\varepsilon}{A}\cap(P\cup T)$, it has a neighborhood $\nei{\varepsilon'}{A}$ that lies entirely within $\nei{\varepsilon}{A}\cap(P\cup T)$.
This neighborhood witnesses the condition~\ref{item:T-half-open-2} for $P$ being $L_1$-half-open.
\end{proof}

\begin{lemma}
\label{lem:branch-bd-segment-in-S}
Let\/ $P$ be a branch of\/~$R$.
If\/ $A_0,A_1\in P\cap\bd P$, then\/ $\overline{A_0A_1}\subseteq R$.
\end{lemma}

\begin{proof}
Let $A_0,A_1\in P\cap\bd P$.
By Lemma \ref{lem:branch-half-open}, $P$ is $L_1$-half-open, hence there are $B_0,B_1\in L_1$ such that $\overline{A_0B_0}\setminus\{A_0\}\subseteq L_1$ and $\overline{A_1B_1}\setminus\{A_1\}\subseteq L_1$.
There is a polygonal line $\Gamma_1$ in $P$ connecting $A_0$ with $A_1$, and a polygonal line $\Gamma_2$ in $L_1$ connecting $B_0$ with~$B_1$.
These polygonal lines together with the line segments $\overline{A_0B_0}$ and $\overline{A_1B_1}$ form a closed polygonal curve $\Gamma$ in~$R$.
We can assume without loss of generality that $\Gamma$ is simple (see Figure~\ref{fig:redrawing}) and that the $x$-coordinates of $A_0$ and $A_1$ are equal to~$0$.
We also assume that no two vertices of $\Gamma$ except $A_0$ and $A_1$ have the same $x$-coordinates.

We color the points of $\Gamma\cap L_1$ red and the points of $\Gamma\cap P$ blue.
For convenience, we assume that $A_0$ and $A_1$ have both colors.
Let $Z$ denote the interior region delimited by $\Gamma$ including $\Gamma$ itself.
Since $R$ is simply connected, we have $Z\subseteq R$.

Let $x_1<\dotsb<x_n$ be the $x$-coordinates of all vertices of~$\Gamma$.
We use $[n]$ to denote the set of indices $\{1,\dotsc,n\}$.
Since the $x$-coordinates of $A_0$ and $A_1$ are zero, there is $j\in[n]$ such that $x_j=0$.
For $i\in[n]$, we let $l_i$ be the vertical line $\{x_i\}\times\bbR$.
Since the $x$-coordinates of the vertices of $\Gamma\setminus\{A_0,A_1\}$ are distinct, there is at most one vertex of $\Gamma$ on $l_i$ for every $i\in[n]\setminus\{j\}$.
For $i\in[n]$, the intersection of $Z$ with $l_i$ is a family of closed line segments with endpoints from $\Gamma\cap l_i$.
Some of the segments can be \emph{trivial}, that is, consisting of a single point, and some segments can contain a point of $\Gamma$ in their interior.

For $i\in[n]$ and a point $A\in\Gamma\cap l_i$, we say that a point $B$ is a \emph{left neighbor} of $A$ if $B$ lies on $\Gamma\cap l_{i-1}$ and $\overline{AB}\subseteq\Gamma$.
Similarly, $B$ is a \emph{right neighbor} of $A$ if $B\in\Gamma\cap l_{i+1}$ and $\overline{AB}\subseteq\Gamma$.
Note that every point $A\in\Gamma\cap l_i$ has exactly two neighbors and if $A\notin\{A_0,A_1\}$, then the neighbors of $A$ have the same color as~$A$.
We distinguish two types of points of $\Gamma\cap l_i$.
We say that a point $A\in\Gamma\cap l_i$ is \emph{one-sided} if it either has two right or two left neighbors.
Otherwise, we say that $A$ is \emph{two-sided}.
That is, $A$ is two-sided if it has one left and one right neighbor.
See Figure~\ref{fig:polygonal}.

Note that every one-sided point is a vertex of $\Gamma$ and that one-sided points from $\Gamma\cap l_i$ are exactly the points of $\Gamma\cap l_i$ that either form a trivial line segment or that are contained in the interior of some line segment of $Z\cap l_i$.
Consequently, every line segment in $Z\cap l_i$ contains at most one point of $\Gamma$ in its interior.

For $2\leq i\leq n$ and $C,D\in l_i\cap\Gamma$, let $\overline{CD}$ be a line segment in $Z\cap l_i$ whose interior does not contain a point of $\Gamma$ with a left neighbor.
Let $A$ and $B$ be left neighbors of $C$ and $D$, respectively, such that there is no left neighbor of $C$ and $D$ between $A$ and $B$ on~$l_{i-1}$.
Since no point between $A$ and $B$ on $l_{i-1}$ can have a right neighbor, we have $\overline{AB}\subseteq Z\cap l_{i-1}$ and $A,B,C,D$ are vertices of a trapezoid whose interior is contained in~$Z$.
An analogous statement holds for right neighbors of $C$ and $D$ provided that the interior of $\overline{CD}$ does not contain a point of $\Gamma$ with a right neighbor.

\begin{figure}[t]
\centering
\includegraphics{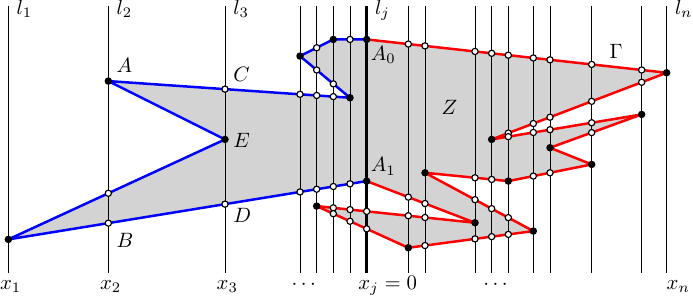}
\caption{Situation in the proof of Lemma \ref{lem:branch-bd-segment-in-S}.
Here $A$ is a left neighbor of $C$ and $B$ is a left neighbor of~$D$.
The points $B$, $C$, and $D$ are two-sided, the points $A$ and $E$ are one-sided.}
\label{fig:polygonal}
\end{figure}

\begin{claim*}\itshape
Let\/ $i\in[n]\setminus\{j\}$, and let\/ $C$ and\/ $D$ be points of\/ $\Gamma\cap l_i$ satisfying\/ $\overline{CD}\subseteq Z\cap l_i$.
Then\/ $C$ and\/ $D$ have the same color.
\end{claim*}

First, we will prove the claim by induction on $i$ for all $i<j$.
The claim clearly holds for $i=1$, as $Z\cap l_1$ contains only a single vertex of~$\Gamma$.
Fix $i$ with $1<i<j$ and suppose that the claim holds for $i-1$.
Let $C,D\in\Gamma\cap l_i$ be points satisfying $\overline{CD}\subseteq Z\cap l_i$.
We show that $C$ and $D$ have the same color.
Obviously, we can assume that the line segment $\overline{CD}$ is non-trivial.
Assume first that the points $C$ and $D$ are two-sided.

Suppose the interior of $\overline{CD}$ does not contain a point of $\Gamma$ with a left neighbor.
Let $A,B\in\Gamma\cap l_{i-1}$ be the left neighbors of $C$ and $D$, respectively.
Then $\overline{AB}\subseteq Z\cap l_{i-1}$.
Thus $A$ and $B$ have the same color by the induction hypothesis.
Since $C,D\notin\{A_0,A_1\}$, the points $A$ and $C$ have the same color as well as the points $B$ and~$D$.
This implies that $C$ and $D$ have the same color too.
If there is a point $E$ of $\Gamma$ in the interior of $\overline{CD}$, then it follows from $R$-convexity of $P$ (Lemma \ref{lem:branch-R-convex}) and $L_1$ (Lemma \ref{lem:L_1-R-convex}) that $E$ has the same color as $C$ and~$D$.

Now, suppose the interior of $\overline{CD}$ contains a point $E$ of $\Gamma$ with a left neighbor.
We have already observed that there is exactly one such point on $\overline{CD}$.
We also know that $E$ has two left neighbors.
The points $C$ and $E$ with their left neighbors $A$ and $B$, respectively, where there is no left neighbor of $E$ between $A$ and $B$ on $l_{i-1}$, form a trapezoid in $Z$ such that $\overline{AB}\subseteq Z\cap l_{i-1}$.
From induction hypothesis $A$ and $B$ have the same color which implies that $C$ and $E$ have the same color as well.
Similarly, $D$ and $E$ have the same color which implies that $C$ and $D$ have the same color as well.

The case where either $C$ or $D$ is one-sided is covered by the previous cases.
The same inductive argument but in the reverse direction shows the claim for all $i$ with $j<i\leq n$.
This completes the proof of the claim.

Now, consider the inclusion-maximal line segment $\overline{CD}$ of $Z\cap l_j$ that contains~$A_0$.
We can assume that either $C$ and $D$ are two-sided.
Suppose for a contradiction that $A_1$ is not contained in $\overline{CD}$.
If $\overline{CD}$ is trivial, that is, $C=D=A_0$, then $A_0$ is one-sided and its neighbors $A$ and $B$ have different colors, as $\Gamma$ changes color in~$A_0$.
This is impossible according to the claim, since we have $\overline{AB}\subseteq Z\cap l_{i-1}$ or $\overline{AB}\subseteq Z\cap l_{i+1}$.
Therefore $\overline{CD}$ is non-trivial.

First, we assume that $A_0$ is an endpoint of $\overline{CD}$, say $C=A_0$.
Then $A_0$ is two-sided.
By symmetry, we can assume that the left neighbor $A$ of $A_0$ and the left neighbor $B$ of $D$ have different colors.
If there is no point of $\Gamma$ with a left neighbor in the interior of $\overline{A_0D}$, then $\overline{AB}\subseteq Z\cap l_{i-1}$.
This is impossible according to the claim.
If there is a point $E\in\Gamma$ with a left neighbor in the interior of $\overline{A_0D}$, then we can use a similar argument either for the line segment $\overline{A_0E}$ or for $\overline{ED}$, as the neighbors of $E$ have the same color.
The last case is when $A_0$ is an interior point of $\overline{CD}$.
Since $\Gamma$ does not change color in $C$ nor in $D$, we apply the claim to one of the line segments $\overline{A_0C}$, $\overline{A_0D}$, and $\overline{CD}$ and show, again, that none of the cases is possible.
Altogether, we have derived a contradiction.

Therefore, $A_0$ and $A_1$ are contained in the same line segment of $Z\cap l_i$.
This completes the proof, as $Z\subseteq R$.
\end{proof}

\begin{lemma}
\label{lem:branch-bd-segment-open}
For every branch\/ $P$ of\/ $R$, the set\/ $P\cap\bd P$ is an open segment.
\end{lemma}

\begin{proof}
Let $P$ be a branch of~$R$.
First, we show that the set $P\cap\bd P$ is convex.
Let $A_0,A_1\in P\cap\bd P$.
By Lemma \ref{lem:branch-bd-segment-in-S}, we have $\overline{A_0A_1}\subseteq R$.
It follows that $\overline{A_0A_1}$ is disjoint from the root of $R$ and thus is contained in~$\inte{R}$.
By compactness, $\overline{A_0A_1}$ has a neighborhood $\nei{\varepsilon}{\overline{A_0A_1}}$ contained in~$\inte{R}$.
Since $P$ is $L_1$-half-open by Lemma \ref{lem:branch-half-open}, there are $B_0,B_1\in\nei{\varepsilon}{\overline{A_0A_1}}\cap L_1$ such that $\overline{A_0B_0}\setminus\{A_0\}\subseteq L_1$ and $\overline{A_1B_1}\setminus\{A_1\}\subseteq L_1$.
For $t\in[0,1]$, let $A_t=(1-t)A_0+t A_1$ and $B_t=(1-t)B_0+tB_1$.
We have $A_t\in\overline{A_0A_1}$ and $B_t\in\overline{B_0B_1}$, hence $A_t,B_t\in\nei{\varepsilon}{\overline{A_0A_1}}$, for all $t\in[0,1]$.
Now, it follows from the $R$-convexity of $P$ (Lemma \ref{lem:branch-R-convex}) and $L_1$ (Lemma \ref{lem:L_1-R-convex}) that $A_t\in P$ and $\overline{A_t B_t}\setminus\{A_t\}\subseteq L_1$, hence $A_t\in P\cap\bd P$, for all $t\in[0,1]$.
This shows that $P\cap\bd P$ is convex.

If $P\cap\bd P$ had three non-collinear points, then they would span a triangle with non-empty interior contained in $P\cap\bd P$, which would be a contradiction.
Since $R$ is bounded, the set $P\cap\bd P$ is a line segment.
That it is an open line segment follows directly from Lemma \ref{lem:branch-half-open}.
\end{proof}

\begin{lemma}
\label{lem:branch-induction}
For every\/ $j\geq 2$, every p-component\/ $P$ of\/ $\bigcup_{k\geq j}L_k$ is a rooted set attached to\/~$L_{j-1}$.
Moreover, for\/ $k\geq 1$, the\/ $k$th level of\/ $P$ is equal to\/ $L_{j-1+k}\cap P$.
\end{lemma}

\begin{proof}
The proof proceeds by induction on~$j$.
For the base case, let $P$ be a p-component of $\bigcup_{i\geq 2}L_i$, that is, a branch of~$R$.
It follows from Lemmas \ref{lem:branch-simply-connected}, \ref{lem:branch-half-open} and \ref{lem:branch-bd-segment-open} that $P$ is a rooted set attached to~$L_1$.
Let $\ell$ be the root of $P$, and let $(L'_k)_{k\geq 1}$ be the partition of $P$ into levels.
We prove that $L'_k\subseteq\bigcup_{i=1}^{k+1}L_i$ and $L_{k+1}\cap P\subseteq\bigcup_{i=1}^kL'_i$ for every $k\geq 1$.

Let $A\in L'_k$.
It follows that there is a polygonal line $\Gamma$ with $k$ line segments connecting $A$ to a point $B\in\ell$.
Moreover, since there is no shorter polygonal line connecting $A$ to $\ell$, the last line segment of $\Gamma$ is not parallel to~$\ell$.
Since $P$ is $L_1$-half-open (Lemma \ref{lem:branch-half-open}), there is a neighborhood $\nei{\varepsilon}{B}$ that is split by $\ell$ into two parts, one of which is a subset of~$L_1$.
Let $C$ be a point in $\nei{\varepsilon}{B}\cap L_1$ such that $\overline{BC}$ is an extension of the last line segment of $\Gamma$.
Since $C\in L_1$, there is a point $D$ on the root of $R$ such that $\overline{CD}\subseteq L_1$.
The polygonal line $\Gamma$ extended by $\overline{BC}$ and $\overline{CD}$ forms a polygonal line with $k+1$ line segments connecting $A$ to the root of~$R$.
This shows that $L'_k\subseteq\bigcup_{i=1}^{k+1}L_i$.

Now, let $A\in L_{k+1}\cap P$.
It follows that there is a polygonal line $\Gamma$ with $k+1$ line segments connecting $A$ to the root of~$R$.
Since $L_1$ is an open subset of $R$ (Lemma~\ref{lem:L_1-half-open}) and $P$ is a p-component of $R\setminus L_1$, there is a point $B\in\Gamma$ such that the part of $\Gamma$ between $A$ and $B$ (inclusive) is contained in $P$ and is maximal with this property.
It follows that $B\in\ell$.
Since $B\notin L_1$, the part of $\Gamma$ between $A$ and $B$ consists of at most $k$ segments.
This shows that $L_{k+1}\cap P\subseteq\bigcup_{i=1}^kL'_i$.

We have thus proved that $L'_k\subseteq\bigcup_{i=1}^{k+1}L_i$ and $L_{k+1}\cap P\subseteq\bigcup_{i=1}^kL'_i$ for every $k\geq 1$.
To conclude the proof of the base case, we note that a straightforward induction shows that $L'_k=L_{k+1}\cap P$ for every $k\geq 1$.

For the induction step, let $j\geq 3$, and let $P$ be a p-component of $\bigcup_{i\geq j}L_i$.
Let $Q$ be the branch of $R$ containing~$P$.
Let $(L'_k)_{k\geq 1}$ be the partition of $Q$ into levels.
As we have proved for the base case, we have $L'_k=L_{k+1}\cap Q$ for every $k\geq 1$.
Hence $P$ is a p-component of $(\bigcup_{i\geq j}L_i)\cap Q=\bigcup_{i\geq j-1}L'_i$.
By the induction hypothesis, $P$ is a rooted set attached to $L'_{j-2}\subseteq L_{j-1}$.
Moreover, for $k\geq 1$, the $k$th level of $P$ is equal to $L'_{j-2+k}\cap P=L_{j-1+k}$.
This completes the induction step and proves the lemma.
\end{proof}

\begin{proof}[Proof of Lemma \ref{lem:branch-structure}]
The statement~\ref{item:body-1} is a direct consequence of the definition of a rooted set, specifically, of the condition that a rooted set is p-connected.

For the proof of the statement~\ref{item:body-2}, let $P$ be a $j$-body of~$R$.
If $j=1$, then $P=L_1=\Vis(r,R)=\Vis(r,L_1)$, where $r$ is the root of $R$, and by Lemmas \ref{lem:L_1-half-open} and \ref{lem:branch-simply-connected}, $L_1$ is rooted with the same root~$r$.
Now, suppose $j\geq 2$.
Let $Q$ be the p-component of $\bigcup_{k\geq j}L_k$ containing~$P$.
By Lemma \ref{lem:branch-induction}, $Q$ is a rooted set and $L_j\cap Q$ is the first level of~$Q$.
Since $P\subseteq L_j\cap Q$, the definition of the first level yields $P=L_j\cap Q=\Vis(r,Q)=\Vis(r,P)$, where $r$ is the root of~$Q$.
By Lemma \ref{lem:L_1-half-open}, $P$ is a rooted set with the same root~$r$.

Lemma \ref{lem:L_1-half-open} and the fact that $L_1$ is p-connected directly yield the statement~\ref{item:body-3}.

Finally, for the proof of the statement~\ref{item:body-4}, let $P$ be a $j$-body of $R$ with $j\geq 2$.
Let $Q$ be the p-component of $\bigcup_{k\geq j}L_k$ containing~$P$.
As we have proved above, $Q$ is a rooted set and $P$ is the first level of $Q$ and shares the root with~$Q$.
Moreover, by Lemma \ref{lem:branch-induction}, $Q$ (and hence $P$) is attached to~$L_{j-1}$.
The definition of attachment implies that $P$ is attached to a single p-component of $L_{j-1}$, that is, a single $(j-1)$-body of~$R$.
\end{proof}

\section{General dimension}
\label{sec:visibility-higher-dim}

This section is devoted to the proofs of Theorem \ref{thm:upper-bound-full-vis} and Theorem \ref{thm:lower-bound}.
In both proofs, we use the operator $\Aff$ to denote the affine hull of a set of points.

\subsection{Proof of Theorem \ref{thm:upper-bound-full-vis}}

Let $T=(B_0,B_1,\dotsc,B_d)$ be a $(d+1)$-tuple of distinct affinely independent points in~$\bbR^d$.
We say that a permutation $B_0,B_1,\dotsc,B_d$ of $T$ is a \emph{regular permutation} of $T$ if the following two conditions hold:
\begin{enumerate}
\item the segment $\overline{B_0B_1}$ is the diameter of $T$,
\item for $i=2,\dotsc,d-1$, the point $B_i$ has the maximum distance to $\Aff(\{B_0,\dotsc,B_{i-1}\})$ among the points $B_i,B_{i+1},\dotsc,B_d$.
\end{enumerate}
Obviously, $T$ has at least two regular permutations due to the interchangeability of $B_0$ and~$B_1$.
The regular permutation $B_{i_0},B_{i_1},\dotsc,B_{i_d}$ with the lexicographically minimal vector $({i_0},{i_1},\dotsc,{i_d})$ is called the \emph{canonical permutation} of~$T$.

Let $T$ be a $(d+1)$-tuple of distinct affinely independent points in $\bbR^d$, and let $B_0,B_1,\dotsc,B_d$ be the canonical permutation of~$T$.
For $i=1,\dotsc,d-1$, we define $\Box_i(T)$ inductively as follows:
\begin{enumerate}
\item $\Box_1(T)\colonequals\overline{B_0B_1}$,
\item for $i=2,\dotsc,d-1$, $\Box_i(T)$ is the box containing all the points $P\in\Aff(\{B_0,B_1,\dotsc,B_i\})$ with the following two properties:
\begin{itemize}
\item the orthogonal projection of $P$ to $\Aff(\{B_0,B_1,\dotsc,B_{i-1}\})$ lies in $\Box_{i-1}(T)$,
\item the distance of $P$ to $\Aff(\{B_0,B_1,\dotsc,B_{i-1}\})$ does not exceed the distance of $B_i$ to $\Aff(\{B_0,B_1,\dotsc,B_{i-1}\})$,
\end{itemize}
\item $\Box_d(T)$ is the box containing all the points $P\in\bbR^d$ such that the orthogonal projection of $P$ to $\Aff(\{B_0,B_1,\dotsc,B_{d-1}\})$ lies in $\Box_{d-1}(T)$ and 
\begin{equation*}
\lambda_d(\Conv(\{B_0,B_1,\dotsc,B_{d-1},P\}))\leq\lambda_d(S)\conv(S).
\end{equation*}
\end{enumerate}
The definition of $\Box_d(T)$ is independent of $B_d$, so we can define
$\Box_d(T\setminus\{B_d\})$ by
\begin{equation*}
\Box_d(T\setminus\{B_d\})\colonequals\Box_d(T).
\end{equation*}
It is not hard to see that this gives us a proper definition of $\Box_d(T^{-})$ for every $d$-tuple $T^{-}$ of $d$ distinct affinely independent points in~$\bbR^d$.

\begin{lemma}
\begin{enumerate}
\item\label{item:boxes-1} For\/ $i=1,\dotsc,d-1$, the box\/ $\Box_i(T)$ contains the orthogonal projection of any point of\/ $T$ to\/ $\Aff(\{B_0,B_1,\dotsc,B_{i-1}\})$.
\item\label{item:boxes-2} If\/ $\Conv(T)\subseteq S$ then\/ $\Box_d(T)$ contains the point\/~$B_d$.
\end{enumerate}
\end{lemma}

\begin{proof}
We prove the statement~\ref{item:boxes-1} by induction on~$i$.
First, let $i=1$.
Then the segment $\Box_1(T)$ must contain every point $A_j\in T$ since otherwise one of the segments $\overline{B_0A_j}$ and $\overline{B_1A_j}$ would be longer than the segment $\overline{B_0B_1}$.
Further, if a point $A_j\in T$ satisfies the statement~\ref{item:boxes-1} for a parameter $i\in\{1,\dotsc,d-2\}$ then it also satisfies the statement~\ref{item:boxes-1} for the parameter $i+1$ since otherwise $A_j\neq B_{i+1}$ and $A_j$ should have been chosen for~$B_{i+1}$.

The statement~\ref{item:boxes-2} follows from the fact that $\Conv(T)\subseteq S$ implies $\lambda_d(\Conv(T))\leq\smc(S)=\lambda_d(S)\conv(S)$.
\end{proof}

For $i=1,\dotsc,d-1$, let $d_i$ be the distance of $B_i$ to $\Aff(\{B_0,B_1,\dotsc,B_{i-1}\})$.
In particular, $d_1$ is the diameter of~$T$.
The following observation follows from the definition of the canonical permutation of $T$ and from the construction of the boxes $\Box_i(T)$.

\begin{observation}
\phantomsection
\label{obs:sides-of-boxes}
\begin{enumerate}
\item\label{item:sides-of-boxes-1} The\/ $(d-1)$-dimensional measure of the simplex\/ $\Conv(T\setminus\{B_d\})$ is equal to\/ $d_1d_2\dotsm d_{d-1}/(d-1)!$.
\item\label{item:sides-of-boxes-2} The sides of\/ $\Box_d(T)$ have lengths\/ $d_1,2d_2,\dotsc,2d_{d-1}$, and\/ $\frac{d!}{d_1d_2\dotsm d_{d-1}}\lambda_d(S)\conv(S)$.
\end{enumerate}
\end{observation}

\begin{proof}[Proof of Theorem \ref{thm:upper-bound-full-vis}]
To estimate $\beer_d(S)$, we partition $\Simp_d(S)$ into the following $d+2$ subsets:
\begin{align*}
X&\colonequals\{T\in\Simp_d(S)\colon\text{$T$ is affinely dependent}\},\\
Y_i&\colonequals\{T=(A_0,\dotsc,A_d)\in\Simp_d(S)\colon\begin{minipage}[t]{210pt}\raggedright
\strut $T$ is affinely independent, and $A_i$ is the last\\
element of the canonical permutation of $T\}$,\\
for $i=0,\dotsc,d$.\strut
\end{minipage}
\end{align*}
We point out that $T$ is considered to be affinely dependent in the above definitions of $X$ and $Y_i$ also in the degenerate case when some point of $S$ appears more than once in~$T$.
We have $\lambda_{d(d+1)}(X)=0$.
Let $i\in\{0,\dotsc,d\}$.
The set $Y_i$ is a subset of the set
\begin{align*}
Y_i'\colonequals\{T=(A_0,\dotsc,A_d)\in S^{d+1}\colon\begin{minipage}[t]{210pt}\raggedright 
\strut $T\setminus\{A_i\}$ is affinely independent and we have\\
$A_i\in\Box_d(T\setminus\{A_i\})\}$.\smash[t]{\strut}
\end{minipage}
\end{align*}
By Observation \ref{obs:sides-of-boxes}~\ref{item:sides-of-boxes-2}, $\lambda_d(\Box_d(T\setminus\{A_i\}))$ is equal to $z\colonequals 2^{d-2}d!\lambda_d(S)\conv(S)$ for every set $T\setminus\{A_i\}$ appearing in the definition of~$Y_i'$.
Therefore, by Fubini's Theorem, the set $Y_i'$ is $\lambda_{d(d+1)}$-measurable and, moreover,
\begin{equation*}
\lambda_{d(d+1)}(Y_i')=(\lambda_d(S))^dz=(\lambda_d(S))^{d+1}2^{d-2}d!\conv(S).
\end{equation*}
Thus,
\begin{equation*}
\beer_d(S)=\frac{\lambda_{d(d+1)}(\Simp_d(S))}{\lambda_d(S)^{d+1}}\leq\frac{\lambda_{d(d+1)}(X)+\sum_{i=0}^d\lambda_{d(d+1)}(Y_i')}{\lambda_d(S)^{d+1}}=2^{d-2}(d+1)!\conv(S).
\end{equation*}
This completes the proof of Theorem \ref{thm:upper-bound-full-vis}.
\end{proof}

\subsection{Proof of Theorem \ref{thm:lower-bound}}

In the following, we make no serious effort to optimize the constants.
As the first step towards the proof of Theorem \ref{thm:lower-bound}, we show that if we remove an arbitrary $n$-tuple of points from the open $d$-dimensional box $(0,1)^d$, then the $d$-index of convexity of the resulting set is of order $\Omega(\frac{1}{n})$.

\begin{lemma}
\label{lem:holes}
For every positive integer\/ $n$ and every\/ $n$-tuple\/ $N$ of points from\/ $(0,1)^d$, the set\/ $S\colonequals(0,1)^d\setminus N$ satisfies\/ $\beer_d(S)\geq 1/2n$.
\end{lemma}

\begin{proof}
Let $S$ and $N=\{B_1,\dotsc,B_n\}$ be the sets from the statement and let $0$ be the origin.
We use $S^{d-1}_*$ to denote the set of $(d-1)$-tuples $(A_1,\dotsc,A_{d-1})\in S^{d-1}$ that satisfy the following: for every $B\in N$ the points $A_1,\dotsc,A_{d-1},B$ are affinely independent and $\Aff(\{A_1,\dotsc,A_{d-1},B\})\cap(N\cup\{0\})=\{B\}$.
Note that the set $S^{d-1}_*$ is measurable and $\lambda_{d(d-1)}(S^{d-1}_*)=1$.
If $h$ is a hyperplane in $\bbR^d$ that does not contain the origin, we use $h^-$ and $h^+$ to denote the open half-spaces defined by $h$ such that $0\in h^-$.

Let $(A_1,\dotsc,A_{d-1})\in S^{d-1}_*$.
For a point $B\in N$, we let $h_{A_1,\dotsc,A_{d-1},B}$ be the hyperplane determined by the $d$-tuple $(A_1,\dotsc,A_{d-1},B)$.
Since $(A_1,\dotsc,A_{d-1})\in S^{d-1}_*$, we see that $h_{A_1,\dotsc,A_{d-1},B}$ satisfies $h_{A_1,\dotsc,A_{d-1},B}\cap N=\{B\}$ and that it does not contain the origin.
Therefore the half-spaces $\smash[t]{h_{A_1,\dotsc,A_{d-1},B}^-}$ and $\smash[t]{h_{A_1,\dotsc,A_{d-1},B}^+}$ are well defined.

For every $(d-1)$-tuple $(A_1,\dotsc,A_{d-1})\in S^{d-1}_*$, we split the set $S$ into $2n$ pairwise disjoint open convex sets that are determined by the hyperplanes $h_{A_1,\dotsc,A_{d-1},B}$ for $B\in N$.
This is done by induction on~$n$.
For $n=1$, we set $P_1(A_1,\dotsc,A_{d-1})\colonequals S\cap\smash[t]{h_{A_1,\dotsc,A_{d-1},B_1}^-}$ and $P_2(A_1,\dotsc,A_{d-1})\colonequals S\cap\smash[t]{h_{A_1,\dotsc,A_{d-1},B_1}^+}$.
Suppose we have split the set $S$ into sets $P_i(A_1,\dotsc,A_{d-1})$ for $1\leq i\leq 2(n-1)$ and $n\geq 2$.
Consider the hyperplane $h_{A_1,\dotsc,A_{d-1},B_n}$.
Since for every $k\in\{1,\dotsc,n-1\}$ the intersection $h_{A_1,\dotsc,A_{d-1},B_k}\cap h_{A_1,\dotsc,A_{d-1},B_n}$ is the affine hull of $A_1,\dotsc,A_{d-1}$, we see that $h_{A_1,\dotsc,A_{d-1},B_n}\setminus\Aff(\{A_1,\dotsc,A_{d-1}\})$ is contained in two sets $P_i(A_1,\dotsc,A_{d-1})$ and $P_j(A_1,\dotsc,A_{d-1})$ for some $1\leq i<j\leq 2(n-1)$.
We restrict these sets to their intersection with the half-space $\smash[t]{h_{A_1,\dotsc,A_{d-1},B_n}^-}$ and set $P_{2n-1}(A_1,\dotsc,A_{d-1})$ and $P_{2n}(A_1,\dotsc,A_{d-1})$ as the intersection of $\smash[t]{h_{A_1,\dotsc,A_{d-1},B_n}^+}$ with $P_i(A_1,\dotsc,A_{d-1})$ and $P_j(A_1,\dotsc,A_{d-1})$, respectively.
See Figure~\ref{fig:strips} for an illustration.

\begin{figure}[t]
\centering
\includegraphics{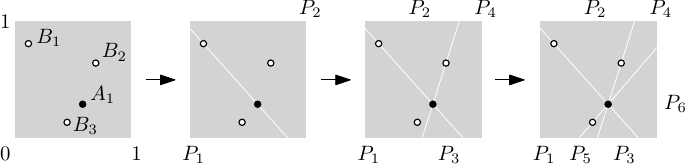}
\caption{The inductive splitting of $S=(0,1)^2\setminus N$ with respect to the point~$A_1$.
The points $B_1,B_2,B_3$ of $N$ are denoted as empty circles and we use a shorthand $P_i$ for $P_i(A_1)$.}
\label{fig:strips}
\end{figure}

Since none of the sets $P_i(A_1,\dotsc,A_{d-1})$ contains a point from $N$, it can be regarded as an intersection of $(0,1)^d$ with open half-spaces.
Therefore every set $P_i(A_1,\dotsc,A_{d-1})$ is an open convex subset of~$S$.
Let $P(A_1,\dotsc,A_{d-1})$ be the set $S\setminus\bigl(\bigcup_{B\in N}h_{A_1,\dotsc,A_{d-1},B}\bigr)$.
Clearly, $\lambda_d(P(A_1,\dotsc,A_{d-1}))=1$.
Since the sets $P_i(A_1,\dotsc,A_{d-1})$ form a partitioning of $P(A_1,\dotsc,A_{d-1})$, we also have $\sum_{i=1}^{2n}\lambda_d(P_i(A_1,\dotsc,A_{d-1}))=1$.

For $i=1,\dotsc,2n$, we let $\calR_i$ be the subset of $S^{d-1}_*\times S^2$ defined as
\begin{equation*}
\calR_i\colonequals\{(A_1,\dotsc,A_{d+1})\in S^{d-1}_*\times S^2\colon A_d,A_{d+1}\in P_i(A_1,\dotsc,A_{d-1})\},
\end{equation*}
and we let $\calR\colonequals\bigcup_{i=1}^{2n}\calR_i$.
The sets $\calR_i$ are pairwise disjoint and it is not difficult to argue that these sets are measurable.
If a $(d+1)$-tuple $(A_1,\dotsc,A_{d+1})$ is contained in $\calR_i$ for some $i\in\{1,\dotsc,2n\}$, then $(A_1,\dotsc,A_{d+1})$ is contained in $\Simp_d(S)$, as $P_i(A_1,\dotsc,A_{d-1})\cup(\Aff(\{A_1,\dotsc,A_{d-1}\})\cap S)$ is a convex subset of~$S$.
Therefore, to find a lower bound for $\beer_d(S)=\lambda_{d(d+1)}(\Simp_d(S))$, it suffices to find a lower bound for $\lambda_{d(d+1)}(\calR)$, because $\calR$ is a subset of $\Simp_d(S)$.
By Fubini's Theorem, we obtain
\begin{align*}
\lambda_{d(d+1)}(\calR_i)&=\int_{(A_1,\dotsc,A_{d+1})\in S^{d-1}_*\times S^2}[(A_1,\dotsc,A_{d+1})\in\calR_i]\\
&=\int_{(A_1,\dotsc,A_{d-1})\in S^{d-1}_*}\left(\int_{(A_d,A_{d+1})\in S^2}[A_d,A_{d+1}\in P_i(A_1,\dotsc,A_{d-1})]\right)\\
&=\int_{(A_1,\dotsc,A_{d-1})\in S^{d-1}_*}\lambda_d(P_i(A_1,\dotsc,A_{d-1}))^2,
\end{align*}
where $[\phi]$ is the characteristic function of a logical expression $\phi$, that is, $[\phi]$ equals $1$ if the condition $\phi$ holds and $0$ otherwise.
For the measure of $\calR$ we then derive
\begin{equation*}
\lambda_{d(d+1)}(\calR)=\sum_{i=1}^{2n}\lambda_{d(d+1)}(\calR_i)=\int_{(A_1,\dotsc,A_{d-1})\in S^{d-1}_*}\sum_{i=1}^{2n}\left(\lambda_d(P_i(A_1,\dotsc,A_{d-1})\right)^2.
\end{equation*}
Since the function $x\mapsto x^2$ is convex, we can apply Jensen's inequality and bound the last term from below by
\begin{equation*}
\int_{(A_1,\dotsc,A_{d-1})\in S^{d-1}_*} 2n\left(\frac{\sum_{i=1}^{2n}\lambda_d(P_i(A_1,\dotsc,A_{d-1}))}{2n}\right)^2=\frac{1}{2n}.\qedhere
\end{equation*}
\end{proof}

The next step in the proof of Theorem \ref{thm:lower-bound} is to find a convenient $n$-tuple $N$ of points from $(0,1)^d$ whose removal produces a set with sufficiently small convexity ratio.
We are going to find $N$ using a continuous version of the well-known Epsilon Net Theorem \cite{HaW87}.
Before stating this result, we need some definitions.

Let $X$ be a subset of $\bbR^d$ and let $\calU$ be a set system on~$X$.
We say that a set $T\subseteq X$ is \emph{shattered} by $\calU$ if every subset of $T$ can be obtained as the intersection of some $U\in\calU$ with~$T$.
The \emph{Vapnik-Chervonenkis dimension} (or \emph{VC-dimension}) of $\calU$, denoted by $\dim(\calU)$, is the maximum $n$ (or $\infty$ if no such maximum exists) for which some subset of $X$ of cardinality $n$ is shattered by~$\calU$.

Let $\calU$ be a system of measurable subsets of a set $X\subseteq\bbR^d$ with $\lambda_d(X)=1$, and let $\varepsilon\in(0,1)$ be a real number.
A set $N\subseteq X$ is called an \emph{$\varepsilon$-net} for $(X,\calU)$ if $N\cap U\neq\emptyset$ for every $U\in\calU$ with $\lambda_d(U)\geq\varepsilon$.

\begin{theorem}[{\cite[Theorem 10.2.4]{Mat02}}]
\label{thm:epsilon-net}
Let\/ $X$ be a subset of\/ $\bbR^d$ with\/ $\lambda_d(X)=1$.
Then for every system\/ $\calU$ of measurable subsets of\/ $X$ with\/ $\dim(\calU)\leq v$, $v\geq 2$, there is a\/ $\frac{1}{r}$-net for\/ $(X,\calU)$ of size at most\/ $2vr\log_2 r$ for\/ $r$ sufficiently large with respect to\/~$v$.
\end{theorem}

To apply Theorem \ref{thm:epsilon-net}, the VC-dimension of the set system $\calU$ has to be finite.
However, it is known that the VC-dimension of all convex sets in $\bbR^d$ is infinite (see e.g.\ \cite[page 238]{Mat02}).
Therefore, instead of considering convex sets directly, we approximate them by ellipsoids.

A \emph{$d$-dimensional ellipsoid} in $\bbR^d$ is an image of the closed $d$-dimensional unit ball under a nonsingular affine map.
A \emph{convex body} in $\bbR^d$ is a compact convex subset of $\bbR^d$ with non-empty interior.
The following result, known as John's Lemma \cite{Joh48}, shows that every convex body can be approximated by an inscribed ellipsoid.

\begin{lemma}[{\cite[Theorem 13.4.1]{Mat02}}]
\label{lem:john-ellipsoid}
For every\/ $d$-dimensional convex body\/ $K\subseteq\bbR^d$, there is a\/ $d$-dimensional ellipsoid\/ $E$ with the center\/ $C$ that satisfies
\begin{equation*}
E\subseteq K\subseteq C+d(E-C).
\end{equation*}
In particular, we have\/ $\lambda_d(K)/d^d\leq\lambda_d(E)$.
\end{lemma}

As the last step before the proof of Theorem \ref{thm:lower-bound}, we mention the following fact, which implies that the VC-dimension of the system $\calE$ of $d$-dimensional ellipsoids in $\bbR^d$ is at most $\binom{d+2}d$.

\begin{lemma}[{\cite[Proposition 10.3.2]{Mat02}}]
\label{lem:bounded-VC-dim}
Let\/ $\bbR[x_1,\dotsc,x_d]_{\leq t}$ denote the set of real polynomials in\/ $d$ variables of degree at most\/ $t$, and let
\begin{equation*}
\calP_{d,t}=\bigl\{\{x\in\bbR^d\colon p(x)\geq 0\}\colon p\in\bbR[x_1,\dotsc,x_d]_{\leq t}\bigr\}.
\end{equation*}
Then\/ $\dim(\calP_{d,t})\leq\binom{d+t}{d}$.
\end{lemma}

\begin{proof}[Proof of Theorem \ref{thm:lower-bound}]
Suppose we are given $\varepsilon>0$ which is sufficiently small with respect to~$d$.
We show how to construct a set $S\subseteq\bbR^d$ with $\lambda_d(S)=1$ satisfying $\conv(S)\leq\varepsilon$ and
\begin{equation*}
\beer_d(S)\geq\frac{1}{8\binom{d+2}{d}d^d}\cdot\frac{\varepsilon}{\log_2{1/\varepsilon}}.
\end{equation*}
Without loss of generality we assume that $\varepsilon=d^d/r$ for some integer $r\geq 2d^{2d}$.

Consider the open $d$-dimensional box $(0,1)^d$ and the system $\calE\restriction(0,1)^d$ of $d$-dimensional ellipsoids in $(0,1)^d$.
Since the restriction of $\calE$ to $(0,1)^d$ does not increase the VC-dimension, Lemma \ref{lem:bounded-VC-dim} implies $\dim(\calE\restriction(0,1)^d)\leq\binom{d+2}{d}$.

If we set $n\colonequals 2\binom{d+2}{d}r\lceil\log_2{r}\rceil$, then, by Theorem \ref{thm:epsilon-net}, there is a $\frac{1}{r}$-net $N$ for the system $((0,1)^d,\calE\restriction(0,1)^d)$ of size $n$, having $r$ sufficiently large with respect to~$d$.
Let $S$ be the set $(0,1)^d\setminus N$.
Clearly, we have $\lambda_d(S)=1$.

Suppose $K$ is a convex subset of $(0,1)^d$ with $\lambda_d(K)>\varepsilon$.
Since the measure of $K$ is positive, we can assume that $K$ is a convex body of measure at least~$\varepsilon$.
By Lemma~\ref{lem:john-ellipsoid}, the convex body $K$ contains a $d$-dimensional ellipsoid $E$ with $\lambda_d(E)\geq\varepsilon/d^d=\frac{1}{r}$.
Therefore $E\cap N\neq\emptyset$.
Since we have $E\subseteq K$ and $N\cap S=\emptyset$, we see that $K$ is not a subset of~$S$.
In other words, we have $\conv(S)\leq\varepsilon$.

By Lemma \ref{lem:holes}, we have $\beer_d(S)\geq\frac{1}{2n}$.
According to the choice of $n$ and $r$, the term $\frac{1}{2n}$ is bounded from below by
\begin{equation*}
\frac{1}{4\binom{d+2}{d}r\log_2{2r}}=\frac{\varepsilon}{4\binom{d+2}{d}d^d\log_2{(2d^d/\varepsilon)}}\geq\frac{1}{8\binom{d+2}{d}d^d}\cdot\frac{\varepsilon}{\log_2{1/\varepsilon}},
\end{equation*}
where the last inequality follows from the estimate $2d^d\leq 1/\varepsilon$.
This completes the proof of Theorem \ref{thm:lower-bound}.
\end{proof}

It is a natural question whether the bound for $\beer_d(S)$ in Theorem \ref{thm:lower-bound} can be improved to $\beer_d(S)=\Omega(\conv(S))$.
In the plane, this is related to the famous problem of Danzer and Rogers (see \cite{BrC97,PaT12} and Problem E14 in \cite{CFG91}) which asks whether for given $\varepsilon>0$ there is a set $N'\subseteq(0,1)^2$ of size $O(\frac{1}{\varepsilon})$ with the property that every convex set of area $\varepsilon$ within the unit square contains at least one point from~$N'$.

If this problem was to be answered affirmatively, then we could use such a set $N'$ to stab $(0,1)^2$ in our proof of Theorem \ref{thm:lower-bound} which would yield the desired bound for $\beer_2(S)$.
However it is generally believed that the answer is likely to be nonlinear in $\frac{1}{\varepsilon}$.

\subsection{A set with large \texorpdfstring{$k$}{k}-index of convexity and small convexity ratio}

\begin{proposition}
\label{prop:example}
For every integer\/ $d\geq 2$, the set\/ $S\colonequals[0,1]^d\setminus\bbQ^d$ satisfies\/ $\conv(S)=0$ and\/ $\beer_k(S)=1$ for every positive integer\/ $k<d$.
\end{proposition}

\begin{proof}
Since $\bbQ^d$ is countable and $\lambda_d([0,1]^d)=1$, we have $\lambda_d(S)=1$.
Every convex subset $K$ of $[0,1]^d$ with positive $d$-dimensional measure contains an open $d$-dimensional ball $\fB$ with positive diameter, as there is a $(d+1)$-tuple of affinely independent points of~$K$.
Since $\bbQ^d$ is a dense subset of $\bbR^d$, we see that $\fB\cap\bbQ^d\neq\emptyset$ and thus $\conv(S)=0$.

It remains to estimate $\beer_k(S)$.
By Fubini's Theorem, we have
\begin{equation*}
\beer_k(S)=\int_{(B_1,\dotsc,B_k)\in S^k}\frac{\lambda_d\bigl(\{A\in S\colon\Conv(\{B_1,\dotsc,B_k,A\})\subseteq S\}\bigr)}{\lambda_d(S)^{k+1}}.
\end{equation*}

If $A$ is a point of $S$ such that $\Conv(\{B_1,\dotsc,B_k,A\})$ is not contained in $S$, then $A$ is a point of the affine hull $\Aff(\{B_1,\dotsc,B_k,Q\})$ of $B_1,\dotsc,B_k$ and some $Q\in\bbQ^d$.
Therefore, $\beer_k(S)$ is at least
\begin{equation*}
\int_{(B_1,\dotsc,B_k)\in S^k}\frac{\lambda_d([0,1]^d)-\lambda_d\bigl(\bigcup_{Q\in\bbQ^d}\Aff(\{B_1,\dotsc,B_k,Q\})\bigr)}{\lambda_d(S)^{k+1}}.
\end{equation*}
A countable union of affine subspaces of dimension less than $d$ has $d$-dimensional measure zero and we already know that $\lambda_d(S)=1=\lambda_d([0,1]^d)$, hence $\beer_k(S)=1$.
\end{proof}

\section{Other variants and open problems}
\label{sec:generalizations}

We have seen in Theorem \ref{thm:upper-bound-2d} that a p-componentwise simply connected set $S\subseteq\bbR^2$ whose $\beer(S)$ is defined satisfies $\beer(S)\leq\alpha\conv(S)$, for an absolute constant $\alpha\leq 180$.
Equivalently, such a set $S$ satisfies $\smc(S)\geq\beer(S)\lambda_2(S)/180$.

By a result of Blaschke \cite{Bla17} (see also Sas \cite{Sas39}), every convex set $K\subseteq\bbR^2$ contains a triangle of measure at least $\frac{3\sqrt{3}}{4\pi}\lambda_2(K)$.
In view of this, Theorem \ref{thm:upper-bound-2d} yields the following consequence.

\begin{corollary}
\label{cor:upper-bound-2d-triangle}
There is a constant\/ $\alpha>0$ such that every p-componentwise simply connected set\/ $S\subseteq\bbR^2$ whose\/ $\beer(S)$ is defined contains a triangle\/ $T\subseteq S$ of measure at least\/ $\alpha\beer(S)\lambda_2(S)$.
\end{corollary}

A similar argument works in higher dimensions as well.
For every $d\geq 2$, there is a constant $\beta=\beta(d)$ such that every convex set $K\subseteq\bbR^d$ contains a simplex of measure at least $\beta\lambda_d(K)$ (see e.g.\ Lassak \cite{Las11}).
Therefore, Theorem \ref{thm:upper-bound-full-vis} can be rephrased in the following equivalent form.

\begin{corollary}
\label{cor:upper-bound-full-vis-simplex}
For every\/ $d\geq 2$, there is a constant\/ $\alpha=\alpha(d)>0$ such that every set\/ $S\subseteq\bbR^d$ whose\/ $\beer_d(S)$ is defined contains a simplex\/ $T$ of measure at least\/ $\alpha\beer_d(S)\lambda_d(S)$.
\end{corollary}

What can we say about sets $S\subseteq\bbR^2$ that are not p-componentwise simply connected?
First of all, we can consider a weaker form of simple connectivity: we call a set $S$ \emph{p-componentwise simply\/ $\triangle$-connected} if for every triangle $T$ such that $\bd T\subseteq S$ we have $T\subseteq S$.
We conjecture that Theorem \ref{thm:upper-bound-2d} can be extended to p-componentwise simply $\triangle$-connected sets.

\begin{conjecture}
There is an absolute constant\/ $\alpha>0$ such that every p-componentwise simply\/ $\triangle$-connected set\/ $S\subseteq\bbR^2$ whose\/ $\beer(S)$ is defined satisfies\/ $\beer(S)\leq\alpha\conv(S)$.
\end{conjecture}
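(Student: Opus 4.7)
The plan is to re-run the proof of Theorem~\ref{thmUpperBound2D}, replacing each appeal to simple connectivity by an appeal to $\Delta$-connectivity wherever possible. First, I would establish a reduction analogous to Lemma~\ref{lem-reduction}, bringing us to the case of a bounded open $\Delta$-connected set; that reduction is essentially measure-theoretic and should transfer with only cosmetic changes.

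The encouraging observation is that the core covering estimate, Lemma~\ref{lem-volbaPokryti}, is already triangle-based at heart. In Claims~1 and~2 of its proof the relevant tetragon $Q=AEFH$ is self-intersecting at the point $C=\overline{AH}\cap\overline{EF}$ and decomposes into two triangles $ACE$ and $CFH$, each of whose sides is a subsegment of one of $\overline{AB}$, $\overline{EF}$, $\overline{AH}$, or the root~$r$, all of which are already in~$R$. Hence the boundaries of these two triangles lie in $R$, and $\Delta$-connectivity places the triangles themselves inside~$R$, so the area estimates go through verbatim. Claim~3 uses only the triangle $ACD$ whose boundary consists of two visibility segments and a subsegment of the root, and Lemma~\ref{lem-crossingVisibility} has the same triangular character, so both extend as well. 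Thus the per-point bound $\lambda_2(\fT(A))=O(\supc(S))$ survives, \emph{provided} a suitable level/body decomposition is in place.

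The main obstacle lies upstream, in Lemma~\ref{lem-triangleInt} and everything built on it: Lemma~\ref{lem-segment}, the cutting Lemma~\ref{lem-cutting}, and the tree structure of Lemma~\ref{lem-branch-structure}. In the \emph{non}-self-intersecting case of the tetragon $AA'B'B$, the simply connected proof uses its entire interior; for a merely $\Delta$-connected~$S$ this would require one of the diagonals $\overline{AB'}$, $\overline{A'B}$ to lie in~$S$, which $\Delta$-connectivity does not force. Consequently $\vis(\ell',S)\cap\ell$ need not be a single subsegment, and the body tree of a rooted set may branch ambiguously.

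To circumvent this, I would redefine the level decomposition in a purely triangle-based manner: put $A\in L_k$ iff $A$ is reachable from the root by a chain of $k$ triangles with boundaries in $S$, consecutive triangles sharing an edge, but not by any shorter such chain. $\Delta$-connectivity forces these triangles into~$S$, so the definition is well-posed, and one can check that a segment in $S$ still crosses at most two consecutive levels. The remaining task is a triangle-based substitute for Lemma~\ref{lem-cutting} (splitting an open $\Delta$-connected set along a diagonal into two $\Delta$-connected rooted pieces) and a \emph{quantitative} analog of Lemma~\ref{lem-triangleInt}: when $\vis(\ell',S)\cap\ell$ fails to be a subsegment, the offending gaps should be chargeable to convex subregions of $S$ of total measure $O(\supc(S))$ per point~$A$, adding at most a constant to the covering estimate. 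I expect this last step to be the genuine difficulty of the conjecture, because it requires identifying the right geometric witnesses for each failure of the subsegment property and bounding their combined contribution in terms of $\supc(S)$ rather than in terms of $\lambda_2(S)$.
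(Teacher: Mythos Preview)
The statement you are attempting is Conjecture~\ref{conDelta}: the paper poses it as an open problem and provides no proof whatsoever. There is therefore nothing to compare your proposal against, and your write-up is in any case explicitly a plan rather than a proof --- you yourself flag the ``quantitative analog of Lemma~\ref{lem-triangleInt}'' as ``the genuine difficulty of the conjecture'' and leave it unresolved.

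Your diagnosis of where the argument of Theorem~\ref{thmUpperBound2D} does and does not transfer is largely accurate. You are right that the self-intersecting tetragons in Claims~1 and~2 of Lemma~\ref{lem-volbaPokryti} split into two triangles whose boundaries are already segments in~$R$, so $\Delta$-connectivity fills them; likewise for Claim~3 and Lemma~\ref{lem-crossingVisibility}. You are also right that Lemma~\ref{lem-triangleInt} in its non-self-intersecting case genuinely needs the full interior of a convex tetragon, which $\Delta$-connectivity alone does not supply, and that this propagates through Lemmas~\ref{lem-cutting}, \ref{lem-segment}, and~\ref{lem-branch-structure}. One point you underestimate: the reduction Lemma~\ref{lem-reduction} is not purely measure-theoretic. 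Its proof rests on Lemma~\ref{lem-boundary}, whose final step encloses a boundary point inside a parallelogram~$W$ with $\partial W\subseteq S$ and invokes simple connectivity to conclude $W\subseteq S$; the diagonals of~$W$ are not known to lie in~$S$, so $\Delta$-connectivity does not immediately yield this, and the reduction would also need repair.

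In short, what you have is a reasonable outline of an attack together with a correct identification of the key obstruction, but not a proof; and since the paper has none either, the conjecture remains open.
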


What does the value of $\beer(S)$ say about a planar set $S$ that does not satisfy even a weak form of simple connectivity?
As Proposition \ref{prop:example} shows, such a set may not contain any convex subset of positive measure, even when $\beer(S)$ is equal to~$1$.
However, we conjecture that a large $\beer(S)$ implies the existence of a large convex set whose boundary belongs to~$S$.

\begin{conjecture}
\label{conj:boundary}
For every\/ $\varepsilon>0$, there is a\/ $\delta>0$ such that if\/ $S\subseteq\bbR^2$ is a set with\/ $\beer(S)\geq\varepsilon$, then there is a bounded convex set\/ $C\subseteq\bbR^2$ with\/ $\lambda(C)\geq\delta\lambda(S)$ and\/ $\bd C\subseteq S$.
\end{conjecture}

Theorem \ref{thm:upper-bound-2d} shows that Conjecture \ref{conj:boundary} holds for p-componentwise simply connected sets, with $\delta$ being a constant multiple of~$\varepsilon$.
It is possible that even in the general setting of Conjecture \ref{conj:boundary}, $\delta$ can be taken as a constant multiple of~$\varepsilon$.

Motivated by Corollary \ref{cor:upper-bound-2d-triangle}, we propose a stronger version of Conjecture \ref{conj:boundary}, where the convex set $C$ is required to be a triangle.

\begin{conjecture}
\label{conj:boundary-simplex}
For every\/ $\varepsilon>0$, there is a\/ $\delta>0$ such that if\/ $S\subseteq\bbR^2$ is a set with\/ $\beer(S)\geq\varepsilon$, then there is a triangle\/ $T\subseteq\bbR^2$ with\/ $\lambda(T)\geq\delta\lambda(S)$ and\/ $\bd T\subseteq S$.
\end{conjecture}

Note that Conjecture \ref{conj:boundary-simplex} holds when restricted to p-componentwise simply connected sets, as implied by Corollary \ref{cor:upper-bound-2d-triangle}.

We can generalize Conjecture \ref{conj:boundary-simplex} to higher dimensions and to higher-order indices of convexity.
To state the general conjecture, we introduce the following notation: for a set $X\subseteq\bbR^d$, let $\binom{X}{k}$ be the set of $k$-element subsets of $X$, and let the set $\Skel_k(X)$ be defined by
\begin{equation*}
\Skel_k(X)\colonequals\bigcup_{Y\in\binom{X}{k+1}}\Conv(Y).
\end{equation*}
If $X$ is the vertex set of a $d$-dimensional simplex $T=\Conv(X)$, then $\Skel_k(X)$ is often called the \emph{$k$-dimensional skeleton} of~$T$.
Our general conjecture states, roughly speaking, that sets with large $k$-index of convexity should contain the $k$-dimensional skeleton of a large simplex.
Here is the precise statement.

\begin{conjecture}
For every\/ $k,d\in\bbN$ such that\/ $1\leq k\leq d$ and every\/ $\varepsilon>0$, there is a\/ $\delta>0$ such that if\/ $S\subseteq\bbR^d$ is a set with\/ $\beer_k(S)\geq\varepsilon$, then there is a simplex\/ $T$ with vertex set\/ $X$ such that\/ $\lambda_d(T)\geq\delta\lambda_d(S)$ and\/ $\Skel_k(X)\subseteq S$.
\end{conjecture}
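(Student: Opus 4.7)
Normalize $\lambda_d(S)=1$, and let $\mathcal{G}\subseteq S^{d+1}$ consist of all $(d+1)$-tuples $(A_0,\ldots,A_d)$ such that $\conv\{A_i:i\in Y\}\subseteq S$ for every $(k+1)$-subset $Y$ of $\{0,\ldots,d\}$; equivalently, $\mathcal{G}$ is the set of ordered vertex tuples whose vertex set $X$ satisfies $\skel_k(X)\subseteq S$. My plan proceeds in three steps: first, to show that $\mathcal{G}$ has measure bounded below by a positive function of $\varepsilon$ via a continuous Kruskal--Katona-type inequality; second, to rule out that $\mathcal{G}$ concentrates on tuples of small simplex volume by adapting the box machinery of Theorem~\ref{thmUpperBoundFullVis}; and third, to combine the two to produce the required simplex.

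\emph{Step 1 (clique density).} Regard the symmetric indicator $f(x_0,\ldots,x_k)=\mathbf{1}[\conv\{x_0,\ldots,x_k\}\subseteq S]$ as a $\{0,1\}$-valued $(k+1)$-uniform hypergraphon on $S$ with edge density $\int f=\PrConv_k(S)\geq\varepsilon$. The goal is to establish, either by invoking a ready-to-use statement or by discretizing $S$ via a fine partition and applying the classical Kruskal--Katona or Erd\H{o}s--Simonovits supersaturation theorem, a continuous clique-density lower bound of the form
\begin{equation*}
\lambda_{d(d+1)}(\mathcal{G})\;=\;\int_{S^{d+1}}\prod_{\substack{Y\subseteq\{0,\ldots,d\}\\|Y|=k+1}}f(x_Y)\,d(x_0,\ldots,x_d)\;\geq\;\kappa(\varepsilon,k,d),
\end{equation*}
with $\kappa(\varepsilon,k,d)>0$; one expects a quantitative bound of order $\varepsilon^{\binom{d+1}{k+1}}$.

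\emph{Step 2 (generalised box argument).} The proof of Theorem~\ref{thmUpperBoundFullVis} uses the hypothesis $\conv(T)\subseteq S$ only through the derived inequality $\lambda_d(\conv T)\leq\supc(S)$. Replacing $\supc(S)$ by an arbitrary parameter $V>0$ in the definition of $\boxx_d$, the same construction produces a box of volume $2^{d-2}d!\,V$ by Observation~\ref{obs-sides-of-boxes}.\ref{item-sides-of-boxes-2}; summing over the $(d+1)$ choices of which coordinate plays the role of the last vertex in the canonical permutation then yields
\begin{equation*}
\lambda_{d(d+1)}\bigl(\{T\in S^{d+1}:\lambda_d(\conv T)\leq V\}\bigr)\;\leq\;(d+1)\cdot 2^{d-2}\cdot d!\cdot V.
\end{equation*}

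\emph{Step 3 (combination).} Set $\delta\colonequals\kappa(\varepsilon,k,d)/((d+1)\cdot 2^{d-2}\cdot d!)$. If every $T\in\mathcal{G}$ satisfied $\lambda_d(\conv T)<\delta$, then Step~2 would give $\lambda_{d(d+1)}(\mathcal{G})<\kappa(\varepsilon,k,d)$, contradicting Step~1. Hence some $(A_0,\ldots,A_d)\in\mathcal{G}$ satisfies $\lambda_d(\conv\{A_0,\ldots,A_d\})\geq\delta$, and then $X=\{A_0,\ldots,A_d\}$ and $T=\conv(X)$ give the desired simplex, with $\skel_k(X)\subseteq S$ by definition of $\mathcal{G}$ and $\lambda_d(T)\geq\delta\lambda_d(S)$.

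The principal obstacle is Step~1. For $k=1$ this is the classical graphon clique-density theorem, and the approach would in particular settle Conjecture~\ref{conBoundarySimplex} (and hence Conjecture~\ref{conBoundary}) unconditionally, without any simple-connectivity assumption. For $k\geq 2$ one needs the $(k+1)$-uniform hypergraphon analog; this should follow from discretization and classical Kruskal--Katona, but care is required to ensure the clique-density lower bound survives the limiting procedure. Steps~2 and 3, by contrast, are straightforward modifications of the arguments already in the paper.
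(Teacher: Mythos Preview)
The statement you are attempting to prove is listed in the paper as an open \emph{conjecture}; the paper establishes only the special case $k=d$ (via Corollary~\ref{corUpperBoundFullVisSimplex}) and, under an added simple-connectivity hypothesis, $k=1$, $d=2$ (via Corollary~\ref{corUpperBound2D}). There is thus no proof in the paper to compare against, and a complete argument would resolve an open problem.

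Your Steps~2 and~3 are correct: the box construction in the proof of Theorem~\ref{thmUpperBoundFullVis} uses the hypothesis $\conv(T)\subseteq S$ only to extract the volume bound $\lambda_d(\conv(T))\leq\supc(S)$, so replacing $\supc(S)$ by an arbitrary threshold $V$ indeed yields
\[
\lambda_{d(d+1)}\bigl(\{T\in S^{d+1}:\lambda_d(\conv T)\leq V\}\bigr)\leq (d+1)\,2^{d-2}\,d!\,V.
\]

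The genuine gap is Step~1, and it is not a technicality: the inequality you want is \emph{false} for abstract (hyper)graphons. For $k=1$, the balanced complete bipartite graphon has edge density $1/2$ and triangle density $0$; the Razborov--Nikiforov--Reiher clique-density theorem confirms that the minimum $K_{r+1}$-density over graphons of edge density $p$ is $0$ whenever $p\leq 1-1/r$, so no ``classical graphon clique-density theorem'' gives a positive bound for all $\varepsilon>0$. For $k\geq 2$ the situation is worse: standard Tur\'an-type constructions produce $(k+1)$-uniform hypergraphs of constant positive edge density containing no copy of $K_{k+2}^{(k+1)}$ whatsoever, hence no $K_{d+1}^{(k+1)}$. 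Kruskal--Katona bounds shadows, not clique counts, and Erd\H{o}s--Simonovits supersaturation only applies \emph{above} the Tur\'an density, not for arbitrarily small~$\varepsilon$. Your argument nowhere exploits that $f$ is a visibility hypergraph rather than an arbitrary one; since the purely combinatorial bound fails, some geometric property of visibility would have to enter, and identifying that property is precisely the content of the conjecture.
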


Corollary \ref{cor:upper-bound-full-vis-simplex} asserts that this conjecture holds in the special case of $k=d\geq 2$, since $\Skel_d(X)=\Conv(X)=T$.
Corollary \ref{cor:upper-bound-2d-triangle} shows that the conjecture holds for $k=1$ and $d=2$ if $S$ is further assumed to be p-componentwise simply connected.
In all these cases, $\delta$ can be taken as a constant multiple of $\varepsilon$, with the constant depending on $k$ and~$d$.

Finally, we can ask whether there is a way to generalize Theorem \ref{thm:upper-bound-2d} to higher dimensions, by replacing simple connectivity with another topological property.
Here is an example of one such possible generalization.

\begin{conjecture}
\label{conj:contractible}
For every\/ $d\geq 2$, there is a constant\/ $\alpha=\alpha(d)>0$ such that if\/ $S\subseteq\bbR^d$ is a set with\/ $\beer_{d-1}(S)$ defined whose every p-component is contractible, then\/ $\beer_{d-1}(S)\leq\alpha\conv(S)$.
\end{conjecture}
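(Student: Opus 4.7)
The natural approach is to follow the structure of the proof of Theorem~\ref{thmUpperBound2D}, replacing simple connectivity with contractibility and replacing segments with $(d-1)$-simplices. First I would prove an analog of Lemma~\ref{lem-reduction}, reducing to the case where $S$ is an open, bounded, contractible subset of $\bbR^d$; this should go through without essential change since contractibility, like simple connectivity in the plane, is preserved under the natural open approximations and the p-components of $S$ can be handled independently.

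Next, in place of a maximal diagonal I would select a maximal $(d-1)$-simplex $\Sigma \subseteq S$ and take its affine hull $H$. The analog of Lemma~\ref{lem-cutting} should assert that $S \setminus H$ decomposes into exactly two p-components $S_1, S_2$ and that $H \cap S$ is contractible, so that $S_1 \cup (H \cap S)$ and $S_2 \cup (H \cap S)$ form ``rooted sets'' with root $H \cap S$. This separation property is the first topological obstacle, because unlike the planar case an arbitrary hyperplane typically cuts a contractible set into more than two p-components; a careful choice of $\Sigma$ and a reduced-cohomology argument exploiting the contractibility of the ambient $S$ should be needed. I would then define levels $L_k$ and bodies exactly as in Section~\ref{sec:visibility2D} and extend Lemmas~\ref{lem-triangleInt}, \ref{lem-branch-structure}, and~\ref{lem-segment} to $(d-1)$-simplices; the higher-dimensional analog of Lemma~\ref{lem-triangleInt} --- that $\vis(\sigma',T)\cap\sigma$ is convex for any two $(d-1)$-simplices $\sigma,\sigma'\subseteq T$ --- would follow from a polytope-filling argument passing from boundary to interior via contractibility.

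The heart of the proof, and the main obstacle, is the analog of the covering Lemma~\ref{lem-volbaPokryti}. For each point $A$ of a rooted set $R$ I would construct a measurable region $\fT(A)\subseteq(\bbR^d)^{d-1}$ of $\lambda_{d(d-1)}$-measure $O(\supc(R)\lambda_d(R)^{d-2})$ such that for every $d$-tuple $(A_0,\ldots,A_{d-1})$ with $\conv(\{A_0,\ldots,A_{d-1}\})\subseteq R$, after some canonical reindexing one has $(A_1,\ldots,A_{d-1})\in\fT(A_0)$. The natural construction combines two ingredients: the canonical-permutation trick from the proof of Theorem~\ref{thmUpperBoundFullVis}, used to designate the distinguished vertex $A_0$, and a generalization of the three trapezoid regions $\fV_1,\fV_2,\fV_3$ from Lemma~\ref{lem-volbaPokryti} to prism-shaped regions swept along the $(d-2)$ directions spanned by the remaining vertices. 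The hardest step will be the measure bound itself: in the plane the three cases turn on whether the base segment is long or short and on whether the designated endpoint sees the parent body's root, whereas in higher dimensions the ``base segment'' becomes a base $(d-2)$-face on the separating hyperplane of two adjacent bodies, and controlling all possible shapes of this face while keeping $\lambda_{d(d-1)}(\fT(A))$ of the right order requires a substantially more intricate case analysis. Once this covering lemma is established, the conclusion follows by Fubini in parallel to the end of the proof of Theorem~\ref{thmUpperBound2D}, yielding $\lambda_{d^2}(\hull_{d-1}(S))\leq\alpha\supc(S)\lambda_d(S)^{d-1}$ and hence $\PrConv_{d-1}(S)\leq\alpha\con(S)$.
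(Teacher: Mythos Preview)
The statement you are attempting to prove is a \emph{conjecture}, not a theorem: the paper does not contain a proof of it. The only result the paper establishes in this direction is the very special case where $S$ is a cone over a set $R\subseteq\bbR^{d-1}$ with defined $\PrConv_{d-1}(R)$; there the conclusion follows immediately from Theorem~\ref{thmUpperBoundFullVis} because coning preserves both $\con$ and $\PrConv_k$. So there is no ``paper's own proof'' to compare against, and your proposal is an attack on an open problem.

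As for the proposal itself, the gaps you flag are real and at least one of them looks fatal to the strategy as written. The separation step is the most serious: in $\bbR^2$, a maximal segment in a simply connected open set must disconnect it into exactly two pieces (this is essentially a Jordan-curve phenomenon), but in $\bbR^d$ with $d\geq 3$ there is no reason a hyperplane---even one chosen to contain a maximal $(d-1)$-simplex---should cut a contractible open set into only two p-components, and contractibility gives you nothing about $H_{d-1}$ or Alexander duality in the way you would need. A solid torus with a finger pushed in is contractible, yet a generic hyperplane meets it in several components on each side; no ``careful choice of $\Sigma$'' avoids this in general. Without a two-piece decomposition the entire level/body machinery of Section~\ref{sec:visibility2D} has no starting point.

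The second obstacle you identify, the analog of Lemma~\ref{lem-volbaPokryti}, is also not just ``more intricate case analysis''. In the planar argument every estimate ultimately rests on the fact that a triangle with one side on the root and apex at $A$ is a convex subset of $R$, so its area is at most $\supc(R)$. For $(d-1)$-simplices in a rooted set whose root is merely a $(d-1)$-dimensional set (not a hyperplanar simplex), the corresponding ``cone over the visible part of the root'' need not be convex, and you have no control on its volume in terms of $\supc(R)$. Your proposed bound $\lambda_{d(d-1)}(\fT(A))=O(\supc(R)\lambda_d(R)^{d-2})$ has the right homogeneity, but nothing in the outline indicates where the factor $\supc(R)$ would come from once the convexity of the base cones is lost. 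Absent new geometric ideas at both of these steps, the program does not go through.
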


A modification of the proof of Theorem \ref{thm:upper-bound-full-vis} implies that Conjecture \ref{conj:contractible} is true for star-shaped sets~$S$.

\section*{Acknowledgment}

The authors would like to thank Marek Eliáš for interesting discussions about the problem and participation in our meetings during the early stages of the research.

\end{document}